\renewcommand{\alpha}{\upalpha}
\renewcommand{\beta}{\upbeta}
\renewcommand{\theta}{\uptheta}
\renewcommand{\gamma}{\upgamma}
\renewcommand{\sigma}{\upsigma}
\renewcommand{\delta}{\updelta}
\renewcommand{\epsilon}{\upepsilon}
\renewcommand{\zeta}{\upzeta}
\renewcommand{\eta}{\upeta}
\renewcommand{\kappa}{\upkappa}
\renewcommand{\lambda}{\uplambda}
\renewcommand{\mu}{\upmu}
\renewcommand{\nu}{\upnu}
\renewcommand{\xi}{\upxi}
\renewcommand{\pi}{\uppi}
\renewcommand{\rho}{\uprho}
\renewcommand{\upsilon}{\upupsilon}
\renewcommand{\phi}{\upphi}
\renewcommand{\chi}{\upchi}
\renewcommand{\psi}{\uppsi}
\renewcommand{\omega}{\upomega}
\renewcommand{\varepsilon}{\upvarepsilon}
\renewcommand{\vartheta}{\upvartheta}
\renewcommand{\varpi}{\upvarpi}
\renewcommand{\phi}{\upvarphi}
\renewcommand{\varpi}{\upvarpi}
\renewcommand{\varsigma}{\upvarsigma}
\renewcommand{\varrho}{\upvarrho}
\newcommand{\ot}{\otimes}
\newcommand{\ra}{\to}
\newcommand{\rmd}{\mathrm{d}}
\newcommand{\rmi}{\mathrm{i}}
\newcommand{\bbF}{\mathbb{F}}
\newcommand{\bbZ}{\mathbb{Z}}
\newcommand{\rmC}{\mathrm{C}}
\newcommand{\rmD}{\mathrm{D}}
\newcommand{\rmH}{\mathrm{H}}
\newcommand{\rmK}{\mathrm{K}}
\newcommand{\rmQ}{\mathrm{Q}}
\newcommand{\rmR}{\mathrm{R}}
\newcommand{\rmZ}{\mathrm{Z}}
\newcommand{\frK}{\mathbb{K}}
\newcommand{\frS}{\Sigma}
\newcommand{\Z}{\mathbb{Z}}
\newcommand{\Q}{\mathbb{Q}}
\newcommand{\C}{\mathcal{C}}
\newcommand{\D}{\mathcal{D}}
\DeclareMathOperator{\id}{id}
\DeclareMathOperator{\tr}{tr}
\DeclareMathOperator{\Hom}{Hom}
\DeclareMathOperator{\End}{End}
\DeclareMathOperator{\Aut}{Aut}
\DeclareMathOperator{\Out}{Out}
\DeclareMathOperator{\BrPic}{BrPic}
\newcommand{\Rep}{\mathbf{Rep}}
\newtheorem{theorem}{Theorem}
\newtheorem{proposition}[theorem]{Proposition}
\newtheorem{lemma}[theorem]{Lemma}
\newtheorem*{theorem*}{Theorem}
\newtheorem*{question*}{Question}
\theoremstyle{definition}
\newtheorem{definition}[theorem]{Definition}
\newtheorem{example}[theorem]{Example}
\newtheorem{question}[theorem]{Question}
\newtheorem{remark}[theorem]{Remark}
\numberwithin{theorem}{section}
\numberwithin{equation}{section}
\title{Adams operations and symmetries \hbox{of representation categories}}
\author{Ehud Meir and Markus Szymik}
\date{May 2019}
\begin{document}

\maketitle

\renewcommand{\abstractname}{\vspace{-2\baselineskip}}

\begin{abstract}%
\noindent
Abstract: 
Adams operations are the natural transformations of the representation ring functor on the category of finite groups, and they are one way to describe the usual~$\lambda$--ring structure on these rings. 
From the representation-theoretical point of view, they codify some of the symmetric monoidal structure of the representation category. We show that the monoidal structure on the category alone, regardless of the particular symmetry, determines all the odd Adams operations. On the other hand, we give examples to show that monoidal equivalences do not have to preserve the second Adams operations and to show that monoidal equivalences that preserve the second Adams operations do not have to be symmetric. Along the way, we classify all possible symmetries and all monoidal autoequivalences of representation categories of finite groups. 

\vspace{\baselineskip}
\noindent MSC: 
18D10, %Monoidal categories (= multiplicative categories), symmetric monoidal categories, braided categories
19A22, %Frobenius induction, Burnside and representation rings
%19L20 J-homomorphism, Adams operations
20C15 %Ordinary representations and characters

\vspace{\baselineskip}
\noindent Keywords: Representation rings, Adams operations,~$\lambda$--rings, symmetric monoidal categories
\end{abstract}

%%%

\section{Introduction}

Every finite group~$G$ can be reconstructed from the category~$\Rep(G)$ of its finite-dimensional representations if one considers this category as a symmetric monoidal category. This follows from more general results of Deligne~\cite[Prop.~2.8]{Deligne+Milne},~\cite{Deligne}. If one considers the representation category~$\Rep(G)$ as a monoidal category alone, without its canonical symmetry, then it does not determine the group~$G$. See Davydov~\cite{Davydov} and Etingof--Gelaki~\cite{Etingof+Gelaki} for such {\em isocategorical} groups. Examples go back to Fischer~\cite{Fischer}.

%%%

The representation ring~$\rmR(G)$ of a finite group~$G$ is a~$\lambda$--ring. This structure derives from the symmetric monoidal structure of the category~$\Rep(G)$. For rings which are torsion-free as abelian groups, such as~$\rmR(G)$, the structure of a~$\lambda$--ring is equivalent to the lesser-known structures of a~$\tau$--ring or $\Psi$--ring~(see~\cite{Hoffman} and~\cite{Wilkerson}). One way or another, it is determined by a family of commuting Frobenius lifts~$\Psi^p$, the \emph{Adams operations}, which evaluate characters on~$p$--th powers. The~$\lambda$--ring structure of~$\rmR(G)$ gives us more information about the group~$G$ than just the ring~$\rmR(G)$. 

For example, the representations rings of the dihedral group~$\rmD_8$ and the quaternion group~$\rmQ_8$ of order~$8$ are isomorphic as rings, but not as~$\lambda$--rings: the second Adams operation~$\Psi^2$ can be used to tell them apart. However, there are also examples~(due to Dade~\cite{Dade}) of non-isomorphic finite groups~(of order~$5^7=78125$ and more) with isomorphic representation rings as~$\lambda$--rings.~(These examples are particularly simple for the Adams operations because these groups are~$p$--groups of exponent~$p$, so that the~$p$--th Adams operation~$\Psi^p$ gives the dimension function, and the other Adams operations~$\Psi^\ell$ for~$\ell$ prime to~$p$ are given by Galois actions, regardless of the group. Dade distinguishes these groups using their Lie rings.) 

\subsection{The main result and examples}

The main result of this paper is the following.

\begin{theorem}\label{oddoperations}
If~$\Rep(G)\to\Rep(G')$ is a monoidal equivalence between representation categories of finite groups~$G$ and~$G'$, then the induced isomorphism~\hbox{$\rmR(G)\cong\rmR(G')$} between their representation rings preserves the Adams operations~$\Psi^p$ for all odd~$p$.
\end{theorem}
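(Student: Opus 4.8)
A monoidal equivalence $F\colon\Rep(G)\to\Rep(G')$ is in particular an equivalence of abelian categories, hence exact, so it induces a ring isomorphism $F_{*}\colon\rmR(G)\to\rmR(G')$ on Grothendieck rings. Transport the standard symmetric braiding $c'$ of $\Rep(G')$ back along $F$ to a braiding $\widetilde c$ on the monoidal category $\Rep(G)$ (transport of structure: $\widetilde c_{X,Y}$ is the unique morphism whose image under $F$ is obtained from $c'_{F(X),F(Y)}$ by composing with the monoidal-structure isomorphisms of $F$). Then $F$ becomes a \emph{symmetric} monoidal equivalence $(\Rep(G),\widetilde c)\to(\Rep(G'),c')$; in particular it preserves exterior powers, so $F_{*}$ intertwines the Adams operations defined on $\rmR(G)$ by $\widetilde c$ with the usual Adams operations on $\rmR(G')$. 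Therefore it suffices to prove the following intrinsic statement: for every odd $n$, the operation $\Psi^{n}$ on $\rmR(G)$ is unchanged when the standard symmetry $c$ of $\Rep(G)$ is replaced by any other symmetric braiding on the monoidal category $\Rep(G)$. (Multiplicativity $\Psi^{m}\Psi^{n}=\Psi^{mn}$ would reduce this further to odd primes, but the computation below handles all odd $n$ at once.)

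\textbf{Step 2: insert the classification of symmetries.}
Here I use the classification of symmetric braidings on the monoidal category $\Rep(G)$ established in this paper: up to isomorphism, every such braiding is the Koszul (Deligne) twist $c^{(z)}$ of the standard one by a central element $z\in Z(G)$ with $z^{2}=1$. Concretely, write each object as $V=V_{+}\oplus V_{-}$, the eigenspace decomposition for the action of $z$ (both summands are subrepresentations, since $z$ is central); then $c^{(z)}_{V,W}=(-1)^{\deg}c_{V,W}$, with sign $-1$ exactly on the summand $V_{-}\otimes W_{-}$. In particular the braiding $\widetilde c$ from Step 1 is $c^{(z)}$ for some such $z$, while $c=c^{(1)}$, so it remains to compare the Adams operations of $c^{(z)}$ with the standard ones.

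\textbf{Step 3: compute the Adams operations of a Koszul twist.}
With respect to $c^{(z)}$ the $n$-th exterior power of $V=V_{+}\oplus V_{-}$ is its $n$-th \emph{super} exterior power, namely $\lambda^{n}_{(z)}(V)=\bigoplus_{i+j=n}\Lambda^{i}(V_{+})\otimes S^{j}(V_{-})$ with the \emph{ordinary} exterior and symmetric powers on the right. Hence, writing $\lambda_{t}(x)=\sum_{n}\lambda^{n}(x)t^{n}$ and $S_{t}(x)=\sum_{n}S^{n}(x)t^{n}=\lambda_{-t}(x)^{-1}$ for the usual $\lambda$-structure, we get in $\rmR(G)[[t]]$
\[
\sum_{n\ge 0}\lambda^{n}_{(z)}(V)\,t^{n}\;=\;\lambda_{t}(V_{+})\cdot S_{t}(V_{-})\;=\;\frac{\lambda_{t}(V_{+})}{\lambda_{-t}(V_{-})}.
\]
Applying the formal logarithm (over $\Q$) together with the standard relation $\log\lambda_{t}(x)=\sum_{n\ge 1}\frac{(-1)^{n-1}}{n}\Psi^{n}(x)t^{n}$ — to the $c^{(z)}$-structure on the left and to the standard structure on the right — and comparing coefficients of $t^{n}$ yields
\[
\Psi^{n}_{(z)}(V)\;=\;\Psi^{n}(V_{+})+(-1)^{n-1}\,\Psi^{n}(V_{-}).
\]
For odd $n$ the sign is $+1$, so $\Psi^{n}_{(z)}(V)=\Psi^{n}(V_{+})+\Psi^{n}(V_{-})=\Psi^{n}(V_{+}\oplus V_{-})=\Psi^{n}(V)$; since this holds for all objects $V$ and both sides are additive, $\Psi^{n}_{(z)}=\Psi^{n}$ on all of $\rmR(G)$. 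This is exactly the statement of Step 1, and the theorem follows. (For even $n$ one gets $\Psi^{n}(V_{+})-\Psi^{n}(V_{-})$ instead, which differs from $\Psi^{n}(V)$ as soon as $z\neq 1$ acts nontrivially on $V$; this is the mechanism behind the promised counterexamples for $\Psi^{2}$.)

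\textbf{Where the difficulty is.}
Granting the classification of symmetries, what remains is only the short generating-function manipulation of Step 3, so the substantive work is the classification itself — that the symmetric braidings on $\Rep(G)$ are precisely the Koszul twists by central elements of order dividing $2$ — which is where Tannakian reconstruction enters and which is carried out separately. One can also see the phenomenon concretely: for a prime $q$ the operation $\Psi^{q}$ is read off from the $C_{q}$-action on $V^{\otimes q}$ by cyclic rotation of the tensor factors, and replacing the symmetry by a Koszul twist multiplies the relevant character by $(-1)^{q-1}$, which is harmless exactly when $q$ is odd.
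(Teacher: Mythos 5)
Your Step 1 (transporting the symmetry of $\Rep(G')$ back along $F$ and reducing to the comparison of two symmetric braidings on the single monoidal category $\Rep(G)$) is sound and is essentially the reduction the paper also makes. The gap is in Step 2: the classification of symmetries you invoke is not correct. The symmetric braidings on the monoidal category $\Rep(\frK G)$ that arise by transport from a standard one are \emph{not} the Koszul twists $c^{(z)}$ by central involutions; by Theorem~\ref{thm41} they are the braidings
\[
v\ot w\longmapsto\frac{\alpha(\phi_1,\phi_2)}{\alpha(\phi_2,\phi_1)}\,w\ot v
\]
attached to a normal abelian subgroup $A\trianglelefteq G$ and a $G/A$--invariant non-degenerate $2$--cocycle $\alpha$ on $A^{\vee}$, where $v\in V(\phi_1)$, $w\in W(\phi_2)$ are isotypic for the $A$--action. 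These are genuinely different from Koszul twists: the twisting factor $\beta(\phi_1,\phi_2)=\alpha(\phi_1,\phi_2)/\alpha(\phi_2,\phi_1)$ is an \emph{alternating} non-degenerate bicharacter (so $\beta(\phi,\phi)=1$ for all $\phi$), and there is no $\Z/2$--grading $V=V_+\oplus V_-$ for which it becomes the super-sign; see Example~\ref{ex:Adams_but_not_symmetric}, where the twisted symmetry on $\Rep(\frK(\rmC_2\times\rmC_2))$ has sign $(-1)^{il+jk}$ rather than a sign of the form $(-1)^{ik}$. Two further sanity checks show your classification cannot be right: a Koszul twist with $z\neq 1$ violates the condition $\Lambda^nV=0$ for $n\gg0$ (your own Step 3 exhibits symmetric powers inside $\lambda^n_{(z)}$), so it is never the transport of the standard symmetry of some $\Rep(G')$; and if the only admissible symmetries were $c^{(1)}=c$, every monoidal equivalence of representation categories would be symmetric, contradicting the existence of non-isomorphic isocategorical groups.

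Because the input to Step 3 is wrong, the generating-function computation, while correct for super-representations, does not address the actual symmetries. For the braidings that really occur, the cyclic permutation $\sigma_k$ on $V^{\ot k}$ acquires a factor built from $\beta$ evaluated along the orbit $\phi, g(\phi),\dots,g^{k-1}(\phi)$, and showing this factor is trivial for odd $k$ is the substantive content of the paper's proof: one normalizes the cochains $z(q)$ so that $z(q)(\phi)=1$ whenever $q(\phi)=\phi$, observes that the relevant correction $a_2(\phi)$ has $2$--power order (since $A$ may be taken to be a $2$--group), and then proves $a_2(\phi)^k=1$ by a telescoping cocycle identity, so that $a_2(\phi)=1$ when $k$ is odd. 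Your closing heuristic that the twist ``multiplies the relevant character by $(-1)^{q-1}$'' is the super-case shadow of this argument, but the real mechanism behind Example~\ref{ex:2} is the cocycle $\alpha$ on the Klein subgroup, not a central involution. To repair the proof you would need to replace Step 2 by the correct classification and redo Step 3 for the $\alpha$--twisted symmetry, at which point you essentially recover the paper's argument.
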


Of course, when the monoidal equivalence is {\it symmetric}, the consequence holds trivially for {\it all} Adams operations, since these are defined using the symmetry of the category. The point here is that we do {\em not} assume this. The result shows that the odd Adams operations do not carry additional information about the symmetry on the monoidal representation category. 

In Example~\ref{ex:2}, we show that this result cannot be improved: there exist groups~$G$ and~$G'$ together with a monoidal equivalence~$\Rep(G)\to\Rep(G')$ that does not preserve the second Adams operations~$\Psi^2$. Of course, such a monoidal equivalence cannot be symmetric.

One might then wonder if the additional preservation of the second Adams operation~$\Psi^2$ is enough to ensure that we have a symmetric monoidal equivalence. We show in Example~\ref{ex:Adams_but_not_symmetric} that this is also not the case, completing the story. 

\subsection{Implications}

Consider two finite groups~$G$ and~$G'$ and a functor~$F\colon \Rep(G)\to \Rep(G')$ which is an equivalence between the underlying~$\frK$--linear categories. We say that~$F$ preserves an Adams operation~$\Psi^n$ if the induced map~$\rmR(G)\to \rmR(G')$ preserves this operator. 
Consider the following statements about a functor~$F$.
\begin{enumerate}[noitemsep,topsep=0pt,leftmargin=5em]
\item[(F1)] The functor~$F$ is symmetric monoidal.
\item[(F2)] The functor~$F$ is monoidal and preserves all Adams operations.
\item[(F3)] The functor~$F$ is monoidal and preserves the odd Adams operations.  
\item[(F4)] The functor~$F$ is monoidal.
\end{enumerate}
Then, obviously, we have 
\[
\xymatrix{
\text{(F1)}\ar@{=>}[r]&\text{(F2)}\ar@{=>}[r]&\text{(F3)}\ar@{=>}[r]&\text{(F4)}.
}
\]
Now, as a consequence of our results, we can decide the validity of all other implications:
(F3) and~(F4) are actually equivalent by Theorem~\ref{oddoperations}, and the other implications cannot be reversed, by Examples~\ref{ex:2} and~\ref{ex:Adams_but_not_symmetric}.
\[
\xymatrix{
\text{(F1)}\ar@{<=}[r]|\not&\text{(F2)}\ar@{<=}[r]|\not&\text{(F3)}\ar@{<=}[r]&\text{(F4)}
}
\]

%%%

\parbox{\linewidth}{The examples that we have in Section \ref{sec:examples} lead to more general questions, about the mere existence of functors. Consider the following statements about two finite groups~$G$ and~$G'$.

\begin{enumerate}[noitemsep,topsep=0pt,leftmargin=5em]
\item[(G1)] The groups~$G$ and~$G'$ are isomorphic.
\item[(G2)] There exists a symmetric monoidal equivalence~$\Rep(G)\simeq\Rep(G')$. 
\item[(G3)] There exists a monoidal equivalence~$\Rep(G)\simeq\Rep(G')$ which preserves all Adams operations.
\item[(G4)] There exists a monoidal equivalence~$\Rep(G)\simeq\Rep(G')$ which preserves the odd Adams operations.
\item[(G5)] There exists a monoidal equivalence~$\Rep(G)\simeq\Rep(G')$.
\end{enumerate}}

Then, obviously, we have $\text{(G1)}\Rightarrow\text{(G2)}\Rightarrow\text{(G3)}\Rightarrow\text{(G4)}\Rightarrow\text{(G5)}$. The equivalence of~(G1) and~(G2) follows from Deligne's work. The equivalence of~(G4) and~(G5) follows from our Theorem~\ref{oddoperations}. The examples of Etingof and Gelaki show that~$\text{(G5)}\nRightarrow \text{(G1)}$, and we now know that this entails~\hbox{$\text{(G4)}\nRightarrow\text{(G2)}$}. On the other hand, we do not know if the condition~(G4) is strictly stronger than~(G3), or if~(G3) is strictly stronger than~(G2). Of course, it is not possible that~(G3) will be equivalent to both~(G2) and~(G4), because this would contradict~\hbox{$\text{(G4)}\nRightarrow\text{(G2)}$}.
\[
\xymatrix@C=1em{
\text{(G1)}\ar@{<=>}[rr]&&\text{(G2)}\ar@{=>}[dr]\ar@{<=}[rr]|\not& &\text{(G4)}\ar@{<=>}[rr]&&\text{(G5)}\\
&&&\text{(G3)}\ar@{=>}[ur]
}
\]

%%%

\subsection{Outline}

We review the Adams operations in Section~\ref{sec:lambda} and give End's characterization of these operations as the natural transformations of the representation ring functor, and offer a representation theoretic description in Proposition~\ref{prop:formula}. This result nicely complements the usual formula on the level of characters. In Section~\ref{isocats}, we review the Etingof--Gelaki classification of isocategorical groups. Building on that, we give a new description of all possible symmetries on the monoidal representation categories~$\Rep(G)$ in Theorem~\ref{thm41} and the monoidal autoequivalences of~$\Rep(G)$ in Theorem~\ref{thm42}. Section~\ref{sec:proof} contains a proof of Theorem~\ref{oddoperations}, and we give the promised examples in the final Section~\ref{sec:examples}.

%%%

\subsection{Notation and conventions}

For any finite group~$G$, we will denote by~$\Rep(\frK G)$ the category of finite-dimensional~$G$--representations over an algebraically closed field~$\frK$ of characteristic~$0$. This is more precise than the notation~$\Rep(G)$ that we have used for the purposes of the introduction only. More generally, we will use the notation~$\Rep(H)$ for the category of finite-dimensional representations of a finite-dimensional Hopf algebra~$H$ over~$\frK$. These representation categories have a canonical structure of a rigid monoidal category.~(Rigidity means that each object admits a left and right dual object. See~\cite{Kassel},~\cite{Calaque+Etingof}, ~\cite{ENO} or~\cite{EGNO} for expositions.) In addition, these representation categories admit a canonical symmetric monoidal structure: for every two objects~$V,W$ in~$\Rep(\frK G)$ we have a natural isomorphism~$\sigma_{V,W}\colon V\ot W\to W\ot V$ given by~$v\ot w\mapsto w\ot v$. This isomorphism satisfies~\hbox{$\sigma_{V,W}\sigma_{W,V} = \id$}. Our analysis of the Adams operations later will lead us to study other symmetric monoidal structures on~$\Rep(\frK G)$ as well. Notice that in the work \cite{Etingof+Gelaki} all possible pairs of isocategorical groups were classified, but the corresponding monoidal equivalences between the corresponding monoidal categories were not described. They are described here in Section \ref{isocats} and in the preprint \cite{Goyvaerts+Meir}, where it is also shown that the representation categories of isocategorical groups have isomorphic Witt groups.

Throughout the paper, we will denote by~$\rmR(G)$ the Grothendieck ring~$\rmK_0\Rep(\frK G)$ of the monoidal category~$\Rep(\frK G)$. This is the representation ring of the group~$G$, and~(by our assumptions on the field~$\frK$) independent of our particular choice of~$\frK$, as the notation suggests.
The representation ring has a canonical~$\Z$--basis given by the isomorphism classes~$[V_1],\ldots,[V_r]$ of a representative set of irreducible representations~$V_1,\ldots,V_r$ of~$G$. It can be identified with a subring of the ring of class functions on~$G$ by means of the character map that sends a representation~$V$ to its character~$\chi_V\colon g\mapsto\tr(\,g\,|\,V\,)$.
In the following, we will identify a class~$[V_i]$ with its character~$\chi_i$ whenever it will facilitate the text.

%%%

\section{Representation rings and power operations}\label{sec:lambda}

In this section we review some background material. We describe the Adams operations as the natural transformations of the representation ring functor on the category of finite groups. They are an equivalent way of encoding the well-known~$\lambda$--ring structure. We also give an interpretation in terms of the representation category, and explain how to determine the order and the exponent of any finite group from these operations.

%%%

\subsection{Representation rings as \texorpdfstring{$\lambda$}{lambda}--rings}

The notion of a~$\lambda$--ring first arose in Grothendieck's work~\cite[4.2]{Grothendieck} on vector bundles and~K-theory. 
Nowadays, it is recognized that~$\lambda$--operations are useful additional structure that is present in many other contexts. For representation theory, see~\cite{Atiyah+Tall},~\cite{Kervaire},~\cite{Kratzer}, and~\cite{Benson}. There are many ways to present the algebraic theory of~$\lambda$--rings. 
See Berthelot's chapter~\cite{Berthelot} in~SGA~6 as well as~\cite{Knutson},~\cite{Patras},~and the recent book~\cite{Yau} by Yau. 
For the purposes of the present text, it is only important to know about Wilkerson's criterion~\cite{Wilkerson}: if a ring is torsion-free as an abelian group, then a~$\lambda$--ring structure on it is equivalent to a family~$(\,\Psi^n\,|\,n\in\bbZ\,)$ of commuting ring endomorphisms that satisfy~$\Psi^{mn}=\Psi^m\Psi^n$ and such that~$\Psi^p$ is a Frobenius lift for each prime number~$p$. In particular, since representation rings are torsion-free, their~$\lambda$--ring structure is determined by the Adams operations and vice versa. 

For a natural number~$k$ and a representation~$V$ of a finite group~$G$, the~$\lambda$--operations on the representation ring~$\rmR(G)$ are defined using the exterior powers: we have~\hbox{$\lambda^k[V] = [\Lambda^k V]$}. Notice that this definition relies on the usual symmetric monoidal structure on the representation category~$\Rep(\frK G)$. The corresponding Adams operations are defined on the level of class functions by the formula~\hbox{$\Psi^k_G(\chi)(g) = \chi(g^k)$}.~(Of course, formulas like this go back to Frobenius, see~\cite[\S 8]{Frobenius}, for instance.) In case the group~$G$ is clear from the context, we will write~$\Psi^k_G=\Psi^k$. Notice that for every~$k$ and every character~$\chi$ it holds that~$\Psi^k(\chi)$ is a~$\Z$--linear combination of the irreducible characters. This is not immediately clear from the given definition of the Adams operations. It follows from the fact that they correspond to the aforementioned~$\lambda$--operations. See~\cite[Ex.~9.3.b]{Serre} and Remark~\ref{rmk:integervalues}.

\begin{remark}
Adams operations in K-theory have been introduced in~\cite[Sec.~4]{Adams}. See also the exposition~\cite{Atiyah} by Atiyah. The latter approach was axiomatized and applied to representation rings of finite groups~$G$ by Hoffman~\cite{Hoffman}, a former student of Adams. The idea is, rather than working with the exterior powers~$\Lambda^k V$ of a~$G$--representation~$V$, to use the tensor powers~$V^{\otimes k}$ as representations of the wreath products~$G\wr\Sigma_k$. The resulting theory of~$\tau$--rings turns out to be equivalent to the theory of~$\lambda$--rings~\cite[Thm.~6.6]{Hoffman}. We will not make use of these facts in the following.
\end{remark}

%%%

\subsection{A characterization of the Adams operations}

The~$k$--th Adams operation~$\Psi^k_G$ depends only the residue class of the integer~$k$ modulo the exponent of~$G$, and therefore can be defined for all elements~$k$ in the pro-finite completion~$\widehat\bbZ$ of the group~$\bbZ$ of integers.

The family~$(\,\Psi_G^k\,|\,G\,)$ defines a natural endomorphism~$\Psi^k\colon\rmR\to\rmR$ of the representation ring functor~$\rmR$, 
thought of as a contravariant functor from the category of finite groups and~(all) homomorphisms to the category of~(commutative) rings~(with unit): 
restriction along a group homomorphism~$G\to G'$ defines a ring homomorphism~$\rmR(G')\to\rmR(G)$. In fact, it is easy to see that this leads to a characterization of the Adams operations. This was proved by End~\cite{End} after the~K-theory case had been proved by tom Dieck~\cite{tomDieck}:

\begin{proposition}\label{prop:char}
{\upshape(End~\cite[Satz~(8.4)]{End})} The monoid of endomorphisms of the representation ring functor is isomorphic to the multiplicative monoid of pro-finite integers~$\widehat\bbZ$. The pro-finite integer~$k$ corresponds to the~$k$--th Adams operation~$\Psi^k$. 
\end{proposition}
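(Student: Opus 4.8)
The plan is to reduce the statement to finite cyclic groups --- for which the representation ring is a group ring $\bbZ[\bbZ/n]$ and a natural endomorphism is captured by a single torsion unit --- and then to exploit naturality under the \emph{non-injective} group homomorphisms (the power maps and the quotient maps) to force that torsion unit to be a power of the tautological character. For the reduction: a natural endomorphism $\phi=(\phi_G)$ of $\rmR$ is determined by its components $\phi_{\bbZ/n}$ on finite cyclic groups, because for any finite group $G$ and any $g\in G$ of order $n$ the inclusion $\iota_g\colon\bbZ/n\hookrightarrow G$ with $1\mapsto g$ induces a restriction $\iota_g^*$, a virtual character of $G$ is determined by its restrictions to all cyclic subgroups (equivalently, by its values at all elements), and $\phi$ commutes with each $\iota_g^*$. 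Fixing $n$ and a faithful one-dimensional character $t$ of $\bbZ/n$, so that $\rmR(\bbZ/n)=\bbZ[t]/(t^n-1)$, the unital ring endomorphism $\phi_{\bbZ/n}$ is then determined by the single element $x:=\phi_{\bbZ/n}(t)$, which satisfies $x^n=1$.

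The heart of the argument is to show that $x=t^k$ for a unique $k\in\bbZ/n$. Naturality with respect to the $a$-th power map $p_a\colon\bbZ/n\to\bbZ/n$, $g\mapsto g^a$, whose pullback sends $t$ to $t^a$, gives $p_a^*(x)=x^a$ for every integer $a$. Let $\mathrm{ev}_g\colon\rmR(\bbZ/n)\to\frK$ denote evaluation of characters at the generator $g$, so that $\mathrm{ev}_g(t)=\omega$ is a primitive $n$-th root of unity and $\mathrm{ev}_g\circ p_a^*=\mathrm{ev}_{g^a}$. From $x^n=1$ we get $\mathrm{ev}_g(x)^n=1$, hence $\mathrm{ev}_g(x)=\omega^k$ for some $k$; then $\mathrm{ev}_{g^a}(x)=\mathrm{ev}_g(x^a)=\omega^{ak}=\mathrm{ev}_{g^a}(t^k)$ for all $a$, so $x$ and $t^k$ have equal evaluation at every element of $\bbZ/n$ and are therefore equal in $\rmR(\bbZ/n)$. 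Hence $\phi_{\bbZ/n}=\Psi^{k}_{\bbZ/n}$ for some $k=k_n\in\bbZ/n$. (Alternatively, one could invoke the classical description of the torsion units of $\bbZ[\bbZ/n]$ as the $\pm t^j$ and then eliminate the sign using naturality along a surjection $\bbZ/2n\twoheadrightarrow\bbZ/n$.)

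It remains to reassemble these over all $n$. Fixing a compatible system of primitive roots of unity, naturality along the quotient $\bbZ/n\twoheadrightarrow\bbZ/m$ for $m\mid n$ --- whose pullback carries the faithful character of $\bbZ/m$ to $t^{n/m}$ --- yields $(n/m)k_m\equiv(n/m)k_n\pmod{n}$, that is, $k_n\equiv k_m\pmod{m}$; so $(k_n)_n$ defines a unique pro-finite integer $\widehat{k}\in\widehat{\bbZ}=\varprojlim_n\bbZ/n$. Running the first reduction backwards, for $G$ and $g\in G$ of order $n$ one finds $\phi_G(\chi)(g)=\bigl(\iota_g^*\phi_G(\chi)\bigr)(1)=\Psi^{k_n}_{\bbZ/n}(\iota_g^*\chi)(1)=\chi(g^{k_n})=\chi(g^{\widehat{k}})=\Psi^{\widehat{k}}_G(\chi)(g)$ for all $\chi$ and all $g$, so $\phi=\Psi^{\widehat{k}}$. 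This produces the bijection between natural endomorphisms of $\rmR$ and $\widehat{\bbZ}$; it is injective already on cyclic groups, and it carries composition of natural transformations to multiplication in $\widehat{\bbZ}$ via the composition law $\Psi^{\widehat{k}}\Psi^{\widehat{\ell}}=\Psi^{\widehat{k}\widehat{\ell}}$.

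I expect the main obstacle to be the middle step, the identification of the torsion unit $x=\phi_{\bbZ/n}(t)$ with an honest character: a priori $\bbZ[\bbZ/n]$ can carry many units, and what rigidifies $x$ is that naturality is imposed not only along automorphisms of $\bbZ/n$ but along \emph{all} of its endomorphisms --- the power maps pin down $\mathrm{ev}_{g^a}(x)$ in terms of $\mathrm{ev}_g(x)$, and once $x^n=1$ is taken into account no freedom is left beyond the choice of $k$. Everything around this step --- the reduction to cyclic groups and the reassembly over all $n$ --- is the routine fact that virtual characters, and natural transformations of representation-ring functors, are detected on cyclic subgroups.
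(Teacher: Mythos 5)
Your proof is correct, and its skeleton (reduce to cyclic groups by restriction, identify $\phi_{\bbZ/n}(t)$ as a power $t^{k_n}$ of the tautological character, check compatibility of the $k_n$ to get a pro-finite integer, then propagate back to all groups via values of characters) is exactly the paper's. Where you genuinely diverge is the middle step, the rigidification of the torsion unit $x=\phi_{\bbZ/n}(t)$. The paper invokes Higman's theorem that every torsion unit of $\bbZ[C^\vee]$ is of the form $\pm\gamma^{j}$, and then kills the sign by naturality along the map to the trivial group (preservation of the augmentation). You instead avoid Higman entirely: naturality along the power endomorphisms $p_a\colon g\mapsto g^a$ of $\bbZ/n$, combined with the fact that each evaluation $\mathrm{ev}_{g^a}$ is a ring homomorphism to $\frK$ sending the torsion unit $x$ to an $n$--th root of unity, pins down all values of $x$ to those of $t^k$, and linear independence of characters does the rest. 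This is more elementary (it replaces a nontrivial theorem about group rings by naturality under non-injective endomorphisms plus basic character theory) at the cost of using more of the functoriality hypothesis; it is a perfectly valid trade, and you correctly flag the Higman route as the alternative. The only point both your write-up and the paper's proof leave implicit inside the proof itself is that each $\Psi^{\widehat k}$ really is an endomorphism of the functor, i.e.\ that $\Psi^k(\chi)$ is again a virtual character; the paper establishes this integrality earlier (via the $\lambda$--operations, cf.\ Remark~\ref{rmk:integervalues}), so no gap results.
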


The idea for a proof is very simple: the group~$\bbZ$ of integers under addition generates the category of groups~(in the sense that it~(co-)represents the forgetful functor to sets) and we could detect every natural transformation on this single example. But the group~$\bbZ$ of integers is not finite, and we have to replace it by the family of its finite quotients, the finite cyclic groups.

\begin{proof}
Let us first see that that every natural transformation~$\Phi$ has the form~$\Psi^k$ for some pro-finite integer~$k$. Since the restrictions to cyclic subgroups induce an injection
\[
\rmR(G)
\longrightarrow
\bigoplus_{%
	\substack{%
		{C\leqslant G}\\{\text{cyclic}}
		}
		}
\rmR(C),
\]
the natural transformation~$\Phi$ is determined by its behavior on cyclic groups. 
For a cyclic group~$C$ of order~$n$, the representation ring~$\rmR(C)$ is isomorphic to the group ring~$\bbZ[\,C^\vee\,]$ of the character group~$C^\vee$. 
This is generated~(as a ring) by any embedding~\hbox{$\gamma\colon C\to\frK^\times$} of groups~(and it holds that~$\gamma^n=1$). 
Then~$\Phi(\gamma)$ is also a unit that is torsion. 
By Higman's theorem~\cite[Thm.~3]{Higman}, for instance, every unit of~$\bbZ[\,C^\vee\,]$ that is torsion is trivial in Whitehead's sense: we have~$\Phi(\gamma)=\pm\gamma^{k_n}$ for some integer~$k_n$. 
The restriction to the trivial subgroup shows that~$\Phi$ preserves the augmentation~\hbox{$\rmR(G)\to\bbZ$} of the representation ring~$\rmR(G)$ given by the dimension, 
so that the sign has to be positive, and~$\Phi_C=\Psi_C^{k_n}$ for all cyclic groups~$C$ of order~$n$. Restriction to subgroups shows that~$k_n$ is congruent to~$k_m$ modulo~$m$ whenever~$m$ divides~$n$, 
so that the family~$(\,k_n\in\bbZ/n\,|\,n\geqslant1)$ describes a pro-finite integer~$k$ such that~$\Phi_C=\Psi_C^k$ for all cyclic groups~$C$. As mentioned at the beginning, 
the equation~$\Phi_G=\Psi_G^k$ for all groups~$G$ follows. Conversely, this argument also shows how to read off~$k$ from~$\Psi^k$.
\end{proof}

\begin{remark}
The reduction to cyclic groups is omnipresent in the representation theory of finite groups, because representations are determined by their characters, which are functions on the group, and every group element lies in a cyclic group. In the other direction, we have Artin's theorem, which says that each character of a finite group is a rational linear combination of characters of representations induced from cyclic subgroups. See the work of Hopkins, Kuhn, and Ravenel~\cite{Hopkins,Kuhn,Hopkins+Kuhn+Ravenel:1,Hopkins+Kuhn+Ravenel:2} for generalizations.
\end{remark}

%%%

\subsection{A representation theoretic interpretation}\label{sec:FS}

We will now explain how to find an expression for the representative matrix of the~$k$--th Adams operation~$\Psi^k\colon\rmR(G)\to\rmR(G)$ on the representation ring~$\rmR(G)$ of a finite group~$G$ in terms of the basis given by irreducible characters. 
To do so we write as before~\hbox{$\chi_1,\dots,\chi_r$} for the irreducible characters of the group~$G$, and write the class function~$\Psi^k(\chi_i)\colon g\mapsto \chi_i(g^k)$ as a linear combination
\[
\Psi^k(\chi_i) = \sum_j \Psi^k_{i,j}\chi_j
\]
of the irreducible characters. 
This gives the matrix~$(\,\Psi^k_{i,j}\,|\,1\leqslant i,j\leqslant r\,)$ of~$\Psi^k$ with respect to the chosen basis---at least in theory. The scalars~$\Psi^k_{i,j}$ can be computed directly, using representation theory, as follows.

We have the standard inner product
\begin{equation}\label{eq:inner_product}
\langle\,\alpha\,|\,\beta\,\rangle=\frac1{|G|}\sum_{g}\overline{\alpha(g)}\beta(g)
\end{equation}
for class functions. The irreducible characters form an orthogonal basis, so that we can compute
\begin{equation}\label{eq:FS}
\Psi^k_{i,j}=\langle\,\chi_j\,|\,\Psi^k(\chi_i)\,\rangle=\frac{1}{|G|}\sum_{g} \chi_j(g^{-1})\chi_i(g^k).
\end{equation}

Let~$V_i$ be an irreducible representation that corresponds to the irreducible character~$\chi_i$. We choose the indexing so that~$\chi_1$ is the trivial character, and~$V_1$ is the trivial representation. 

For a given integer~$k$, the~$k$--th Frobenius--Schur indicator of a~$G$--representation~$V$ is the trace of the~$G$--linear endomorphism
\[
\frac1{|G|}\sum_{g}g^k\colon V\longrightarrow V.
\]
Equation~\eqref{eq:FS} shows that~$\Psi^k_{1,j}$ is the~$k$--th Frobenius--Schur indicator of~$V_j$. 

Let~$\sigma_k$ be the linear operator on the~$k$--th tensor power~$V_i^{\ot k}$ that cyclically permutes the factors. 
Since this is~$\frK G$--linear, we have an induced map~$(\sigma_k)_*$ on the space~$\Hom_G(V_j,V_i^{\ot k})$ by post-composition.
In order to describe~$\Psi^k_{i,j}$ explicitly, we need the following lemma, whose proof can be found in~\cite[Sec.~2.3]{KSZ}:

\begin{lemma}\label{linearalgebra}
Given any~$\frK$--vector space~$V$, let~\hbox{$\sigma_k\colon V^{\otimes k}\to V^{\otimes k}$} denote the cyclic permutation. Given~$\frK$--linear endomorphisms~\hbox{$f_1,\ldots, f_k\colon V\to V$}, we have an equality
\[
\tr(\,\sigma_k(f_1\ot f_2\ot\cdots \ot f_k)\,|\,V^{\otimes k}\,) 
= 
\tr(\,f_1 f_2 \cdots f_k\,|\,V\,)
\]
of traces.
\end{lemma}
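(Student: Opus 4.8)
The plan is to expand the trace of the composite $\sigma_k(f_1\otimes\cdots\otimes f_k)$ directly in a basis and recognize the cyclic structure that collapses it to the single trace $\tr(f_1\cdots f_k)$. Concretely, fix a basis $(e_a)_{a\in A}$ of $V$, so that $(e_{a_1}\otimes\cdots\otimes e_{a_k})$ indexed by tuples $(a_1,\dots,a_k)\in A^k$ is a basis of $V^{\otimes k}$. First I would record the matrix entries: write $f_i(e_b)=\sum_{a}(f_i)_{a,b}\,e_a$, and fix the convention that $\sigma_k$ sends $e_{a_1}\otimes\cdots\otimes e_{a_k}$ to $e_{a_k}\otimes e_{a_1}\otimes\cdots\otimes e_{a_{k-1}}$ (or the inverse convention — either works, one just has to be consistent, and this is the one place a sign-free bookkeeping slip could creep in). Then the diagonal entry of $\sigma_k(f_1\otimes\cdots\otimes f_k)$ at the basis vector indexed by $(a_1,\dots,a_k)$ is $(f_1)_{a_k,a_1}(f_2)_{a_1,a_2}\cdots(f_k)_{a_{k-1},a_k}$ after matching the shifted index against the original, and summing over all tuples gives $\sum_{a_1,\dots,a_k}(f_2)_{a_1,a_2}(f_3)_{a_2,a_3}\cdots(f_k)_{a_{k-1},a_k}(f_1)_{a_k,a_1}$.

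The second step is to recognize this last sum as exactly the expansion of $\tr(f_2 f_3\cdots f_k f_1\mid V)$ in the chosen basis, since a trace of a product of matrices is the sum over a cyclic chain of indices of the corresponding product of entries. Finally, invoke cyclicity of the trace, $\tr(f_2\cdots f_k f_1)=\tr(f_1 f_2\cdots f_k)$, to obtain the claimed identity. An alternative, more conceptual route I could take instead: both sides are $k$-linear in $(f_1,\dots,f_k)$, so it suffices to treat rank-one operators $f_i=v_i\otimes\varphi_i$ (with $\varphi_i\in V^*$), for which $f_1\otimes\cdots\otimes f_k$ and hence $\sigma_k(f_1\otimes\cdots\otimes f_k)$ are themselves rank-one on $V^{\otimes k}$; computing the single nonzero diagonal entry then directly produces $\prod_i\varphi_i(v_{i+1})$ with indices read cyclically, which is visibly $\tr(f_1\cdots f_k)$. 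I would likely present the basis computation as the main argument since it is self-contained and elementary.

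I do not expect a genuine obstacle here — the statement is pure linear algebra and the paper already defers the proof to \cite[Sec.~2.3]{KSZ}, so a sketch suffices. The only thing requiring care is the indexing convention for $\sigma_k$: one must state precisely whether it shifts factors to the left or to the right, so that the index chain closes up into the correct cyclic product and no stray transpose or reordering is introduced. Once the convention is pinned down, the verification that the diagonal sum equals $\tr(f_1 f_2\cdots f_k)$ is immediate from the definition of matrix multiplication and the cyclic invariance of the trace.
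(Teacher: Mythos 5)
The paper gives no proof of this lemma at all---it simply cites \cite[Sec.~2.3]{KSZ}---so there is nothing to compare against except correctness, and your basis computation is the standard, essentially correct argument; the rank-one alternative you sketch is equally fine.

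One concrete point does need fixing, and it is exactly the bookkeeping issue you flagged but then waved away. The parenthetical ``either works'' is false: the two cyclic shifts are not interchangeable here. If $\sigma_k$ moves the last factor to the front, $e_{a_1}\ot\cdots\ot e_{a_k}\mapsto e_{a_k}\ot e_{a_1}\ot\cdots\ot e_{a_{k-1}}$, then the diagonal entry at $(a_1,\dots,a_k)$ is $(f_k)_{a_1,a_k}(f_1)_{a_2,a_1}\cdots(f_{k-1})_{a_k,a_{k-1}}$ and the sum is $\tr(f_k f_{k-1}\cdots f_1)$, which is the \emph{reversed} product and in general differs from $\tr(f_1f_2\cdots f_k)$ (reversal is not a cyclic permutation; e.g.\ for elementary matrices $\tr(E_{12}E_{23}E_{31})=1$ while $\tr(E_{31}E_{23}E_{12})=0$). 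The diagonal entry you actually write down, $(f_1)_{a_k,a_1}(f_2)_{a_1,a_2}\cdots(f_k)_{a_{k-1},a_k}$, belongs to the opposite convention $e_{a_1}\ot\cdots\ot e_{a_k}\mapsto e_{a_2}\ot\cdots\ot e_{a_k}\ot e_{a_1}$, and with that convention everything from there on is correct: the sum is $\tr(f_2\cdots f_kf_1)=\tr(f_1\cdots f_k)$ by genuine cyclicity. So the proof is right once you state the convention that matches your indices; as written, the declared convention and the computation contradict each other, and the claim that the choice is immaterial should be deleted. (In the paper's application, Proposition~\ref{prop:formula}, all $f_i$ are equal, so the distinction is invisible there---but the lemma as a standalone statement is only true for one of the two shifts.)
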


\parbox{\linewidth}{\begin{proposition}\label{prop:formula}
Let~$V_1,\dots,V_r$ be representatives of the isomorphism classes of irreducible~$G$--representations of a finite group~$G$. Then we have
\[
\Psi^k(\chi_i) = \sum_{j=1}^r \tr\left(\,\sigma_k^*\,|\,\Hom_G(V_j,V_i^{\ot k})\,\right) \chi_j
\]
where~$V_i$ is the irreducible representation which corresponds to the character~$\chi_i$ in the representation ring~$\rmR(G)$, where~$\sigma_k$ is the cyclic permutation of the tensor factors of~$V_i^{\ot k}$.
\end{proposition}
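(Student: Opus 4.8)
The plan is to compute the coefficient $\Psi^k_{i,j}$ from equation~\eqref{eq:FS} by reinterpreting the sum $\frac1{|G|}\sum_g \chi_j(g^{-1})\chi_i(g^k)$ in terms of the trace of an operator on a Hom-space, and then to match this against Lemma~\ref{linearalgebra}. First I would fix $i$ and $j$ and rewrite the sum. The key observation is that $\chi_i(g^k)$ is the trace of the action of $g^k$ on $V_i$, which by Lemma~\ref{linearalgebra} (applied with $f_1=\cdots=f_k$ the action of $g$) equals $\tr(\sigma_k\,(\rho_i(g)^{\ot k})\,|\,V_i^{\ot k})$, where $\rho_i$ denotes the representation on $V_i$. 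Thus $\chi_i(g^k)=\chi_{V_i^{\ot k}}(\sigma_k g)$ if we regard $\sigma_k g$ as an operator on $V_i^{\ot k}$ (the cyclic permutation followed by the diagonal $G$-action). So the right-hand side of~\eqref{eq:FS} becomes $\frac1{|G|}\sum_g \chi_j(g^{-1})\,\tr(\sigma_k\,\rho_i^{\ot k}(g)\,|\,V_i^{\ot k})$.

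Next I would recognize this as a standard projection-operator computation. The operator $P=\frac1{|G|}\sum_g \rho_j(g^{-1})\otimes(\text{something})$ is the projection onto $G$-invariants; more precisely, the assignment $g\mapsto \rho_i^{\ot k}(g)\otimes \overline{\rho_j(g)}$ acts on $V_i^{\ot k}\ot V_j^*$, and averaging over $G$ gives the projection onto $(V_i^{\ot k}\ot V_j^*)^G=\Hom_G(V_j,V_i^{\ot k})$. Since $\sigma_k$ commutes with the diagonal $G$-action on $V_i^{\ot k}$, the operator $\sigma_k\ot\id_{V_j^*}$ descends to the endomorphism $\sigma_k^*$ of $\Hom_G(V_j,V_i^{\ot k})$ given by post-composition (this is exactly the map called $(\sigma_k)_*$ in the text, here written $\sigma_k^*$). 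Taking the trace of $\sigma_k$ composed with the averaging projection, and using that the trace of a projection composed with a commuting operator equals the trace of that operator restricted to the image of the projection, yields
\[
\frac1{|G|}\sum_g \chi_j(g^{-1})\,\chi_i(g^k) \;=\; \tr\!\left(\,\sigma_k^*\,\big|\,\Hom_G(V_j,V_i^{\ot k})\,\right),
\]
which is precisely the claimed coefficient. Summing over $j$ with the basis expansion of $\Psi^k(\chi_i)$ gives the proposition.

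The main obstacle I anticipate is bookkeeping around the dualization and the placement of inverses: one must be careful that $\chi_j(g^{-1})=\overline{\chi_j(g)}=\chi_{V_j^*}(g)$ and that the $G$-action on $V_j^*$ used to form $\Hom_G(V_j,V_i^{\ot k})\cong(V_i^{\ot k}\ot V_j^*)^G$ is the correct (contragredient) one, so that the averaging operator genuinely becomes the projection onto the invariants. The other point requiring a line of justification is the identity $\tr(A\circ \pi)=\tr(A|_{\operatorname{im}\pi})$ for a projection $\pi$ and an operator $A$ commuting with $\pi$; this is elementary linear algebra (decompose the space as $\operatorname{im}\pi\oplus\ker\pi$, both $A$-invariant) but should be stated. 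Everything else is a direct substitution using Lemma~\ref{linearalgebra} and the orthogonality relations already recorded in~\eqref{eq:inner_product} and~\eqref{eq:FS}.
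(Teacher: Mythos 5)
Your proposal is correct and follows essentially the same route as the paper's proof: both identify $\Hom_G(V_j,V_i^{\ot k})$ with the $G$--invariants of $V_j^*\ot V_i^{\ot k}$, use the averaging idempotent together with the identity $\tr(A\circ\pi)=\tr(A|_{\operatorname{im}\pi})$ for a commuting projection, and invoke Lemma~\ref{linearalgebra} to convert between $\chi_i(g^k)$ and the trace of $\sigma_k\circ\rho_i(g)^{\ot k}$ on $V_i^{\ot k}$. You merely run the computation in the opposite direction (from the character sum in~\eqref{eq:FS} to the Hom--space trace, rather than the reverse), which changes nothing of substance.
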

}

See Prop.~2.5 in~\cite{Atiyah} and the remarks following it for a similar statement when~$k=p$ is a prime~(and in the context of vector bundles).

\begin{proof}
Consider the vector space~$W=\Hom_G(V_j,V_i^{\ot k})$.
Notice first that~$W$ is the subspace of~$G$--invariants in the representation~$U=\Hom(V_j,V_i^{\ot k})$ where the action of the group~$G$ is the usual diagonal action.
Also, the~$G$--representation~$U$ is naturally isomorphic with~\hbox{$V_j^*\ot V_i^{\ot k}$}. The projection~$U\ra W$ is given by the action of the idempotent~$\epsilon=|G|^{-1}\sum_{g\in G}g$. 
The operator~$\sigma^*$ commutes with the action of~$\epsilon$, and we have~\hbox{$\tr(\,\sigma^*\,|\,W\,)=\tr(\,\epsilon\sigma^*\,|\,U\,).$} This is in fact true whenever we have an operator commuting with a projection. As in Lemma~\ref{linearalgebra} we compute 
\begin{align*}
\tr(\,\sigma^*\,|\,W\,) 
&=\tr(\,\epsilon\sigma^*\,|\,U\,)\\ 
&=\tr\left(\,\frac{1}{|G|}\sum_{g\in G} g\ot g^k\,|\,V_j^*\ot V_i\,\right)\\
&=\frac{1}{|G|}\sum_{g\in G}\chi_j(g^{-1})\chi_i(g^k)\\
&=\Psi^k_{i,j}
\end{align*}
to finish the proof.
\end{proof}

\begin{remark}\label{rmk:integervalues}
We have shown that~$\Psi^k_{i,j}$ is the character value of a~$k$--cycle for some representation of the permutation group~$\frS_k$. This makes it also clear that the coefficients~$\Psi^k_{i,j}$ are integers: all irreducible representations of~$\frS_k$ are already realizable over the prime field~$\Q$, and character values are always algebraic and contained in the field of definition.
\end{remark}

%%%

\subsection{Order and exponent}

To end this section, we explain how to extract some basic numerical information about a group from its representation ring~(as a~$\lambda$--ring). The results here will not be used in the rest of the paper.

Recall that the rank of the representation ring~$\rmR(G)$~(as an abelian group) is the number of conjugacy classes of elements in~$G$, so that this numerical invariant of~$G$ is determined by~$\rmR(G)$. We include here a proof that~$\rmR(G)$ determines also the order of~$G$, as we are unaware of an appropriate reference for that.

\begin{proposition}
The representation ring~$\rmR(G)$ determines the order of~$G$.
\end{proposition}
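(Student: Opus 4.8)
The plan is to recover $|G|$ from the representation ring together with its $\lambda$-ring structure (equivalently, the Adams operations $\Psi^k$), using the regular representation as the key object. Recall that the character of the regular representation $\rho$ satisfies $\chi_\rho(1)=|G|$ and $\chi_\rho(g)=0$ for $g\neq 1$. The first task is therefore to characterize the class $[\rho]\in\rmR(G)$ purely in terms of the available structure. One natural approach: $[\rho]=\sum_i (\dim V_i)[V_i]$, and the dimension function $\rmR(G)\to\bbZ$ is the ring homomorphism that can be singled out as the augmentation — but in fact we need to pin down $[\rho]$ more intrinsically. A cleaner route uses the fact that $[\rho]$ is the unique (up to the obvious normalization) element $x\in\rmR(G)$ with the property that $x\cdot[V]=(\dim V)\,[\rho]$ for every representation $V$; equivalently, $[\rho]$ spans the ideal of elements killed by restriction to every proper subgroup, or is detected by the vanishing of its character off the identity. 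Once $[\rho]$ is identified, the order is $|G|=\dim\rho$, and $\dim$ is already ring-theoretic data.

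The subtlety is that the \emph{ring} $\rmR(G)$ alone may not single out $[\rho]$ canonically, so I expect the Adams operations to enter precisely here. The key observation is that $\Psi^k(\chi_\rho)(g)=\chi_\rho(g^k)$, which equals $|G|$ when $g^k=1$ and $0$ otherwise; summing, one finds that for $k$ equal to the exponent $e$ of $G$ one has $\Psi^e(\chi_\rho)=\chi_\rho$, and more usefully $\langle\chi_\rho,\Psi^k(\chi_\rho)\rangle$ counts solutions to $g^k=1$. Since the exponent is itself detectable from the $\lambda$-ring structure (as indicated in the subsection on order and exponent — the exponent is the smallest $e$ with $\Psi^e=\id$), I would first extract $e$, then work inside $\rmR(G)$ with the operators $\Psi^k$ for $k\mid e$. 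The plan is: (i) recover $e$; (ii) characterize $[\rho]$ as, say, the unique element $x=\sum a_i[V_i]$ with all $a_i\ge 1$ in the canonical basis such that $\Psi^k(x)=x$ for a suitable range of $k$ and $x$ generates the relevant rank-one piece — or more directly, as $\sum_i\dim(V_i)[V_i]$ once one knows the $\dim V_i$; and (iii) read off $|G|=\dim\rho=\langle\chi_\rho,\chi_\rho\rangle^{?}$, i.e. the self-pairing $\langle\chi_\rho,\chi_\rho\rangle=|G|$.

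Actually the cleanest endgame avoids the inner product (which is not a priori part of the ring structure): since $\chi_\rho=\sum_i(\dim V_i)\chi_i$ and each $\dim V_i=\Psi^0(\chi_i)$ is recoverable (the zeroth Adams operation, or the augmentation applied after identifying irreducibles as the basis of indecomposables/the minimal idempotent-free generators), we get $\chi_\rho$ as an explicit element of $\rmR(G)$; then $|G|$ is the coefficient of $\chi_1$ when we expand $\chi_\rho^2$ — indeed $\chi_\rho\cdot\chi_\rho=|G|\,\chi_\rho$, so comparing the coefficient of the trivial character $\chi_1$ on both sides gives $\langle\chi_\rho,\overline{\chi_\rho}\rangle=|G|$ since $\chi_\rho$ is real. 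The main obstacle, as flagged, is step (ii)/(iii): making the identification of $[\rho]$, or equivalently of the individual dimensions $\dim V_i$, genuinely intrinsic to the ring-with-$\lambda$-structure rather than to the character-theoretic presentation — but this is handled because the canonical $\bbZ$-basis of $\rmR(G)$ by irreducibles is intrinsic (it is the unique basis of nonzero elements closed under the involution $[V]\mapsto[V^*]$ with $1$ as a member and nonnegative structure constants), and on this basis $\dim V_i$ is the value of the ring-theoretic augmentation, so no extra structure beyond the ring is actually needed for the final numerics once $\rho$ is in hand.
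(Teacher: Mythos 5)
Your final paragraph correctly senses that the Adams operations should not be needed --- the proposition is about the ring~$\rmR(G)$ alone --- but the ring-theoretic core of your argument has a genuine gap. You identify~$[\rho]=\sum_i(\dim V_i)[V_i]$ by claiming (i) that the basis of irreducibles is ``the unique basis of nonzero elements closed under the involution~$[V]\mapsto[V^*]$ with~$1$ as a member and nonnegative structure constants,'' and (ii) that the dimensions~$\dim V_i$ are then given by ``the'' ring-theoretic augmentation. Neither claim holds at the level of the bare ring. For~$G=\rmC_2$ we have~$\rmR(G)\cong\Z[t]/(t^2-1)$; both~$\{1,t\}$ and~$\{1,-t\}$ are bases containing~$1$, stable under the (here trivial) involution, with nonnegative structure constants, and they are exchanged by the ring automorphism~$t\mapsto -t$, which also exchanges the two ring homomorphisms~$\rmR(G)\to\Z$. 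So neither the set of irreducibles nor the dimension homomorphism is intrinsic to the ring; the duality~$[V]\mapsto[V^*]$ and the restriction maps to subgroups (which you also invoke to characterize~$[\rho]$) are extra structure, not ring data. In this small example the number you extract at the end happens to be the same for either choice, but you give no argument that this is so in general, and the uniqueness statement on which your identification of~$[\rho]$ rests is false as stated.

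The paper's proof avoids all of this by being basis-free: inside~$\mathbb{C}\ot_{\Z}\rmR(G)$, identified with the ring of class functions, the primitive idempotents~$\epsilon_g$ are canonically attached to the ring, and one computes~$\epsilon_g=|\rmC_G(g)|^{-1}\sum_j\chi_j(g^{-1})\chi_j$. Writing~$n_g$ for the least positive integer with~$n_g\epsilon_g\in\rmR(G)$ (or~$0$ if none exists), the maximum~$\max_g n_g$ is attained at the identity and equals~$|G|$; this maximum is manifestly an invariant of the isomorphism class of the ring. To rescue your route you would need to prove that the coefficient of the unit in~$x^2$, where~$x=\sum_i d_i b_i$ is built from an \emph{arbitrary} based-ring structure on~$\rmR(G)$, is independent of that choice --- a nontrivial claim that your proposal asserts implicitly but does not establish.
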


\begin{proof}
We can identify~$\rmR(G)$ with a subring of the ring~$\mathbb{C}\ot_{\Z} \rmR(G)$ and think of this as the ring of complex class functions on~$G$.
The primitive central idempotents in~$\mathbb{C}\ot_{\Z} \rmR(G)$ are the characteristic functions of the conjugacy classes. Let us write~$\epsilon_{g}$ for the one corresponding to the conjugacy class of the element~$g$. The inner product~\eqref{eq:inner_product} evaluates to
\[
\langle\,\chi_j\,|\,\epsilon_g\,\rangle=\frac1{|G|}\sum_h\chi_j(h^{-1})\epsilon_g(h),
\]
which is~$\chi_j(g^{-1})/|G|$ times the number of conjugates of~$g$ in~$G$, or~$|G/\rmC_G(s)|$. 
Since the irreducible characters form an orthonormal basis, we get the formula
\[
\epsilon_g = \frac1{|\rmC_G(g)|}\sum_j\chi_j(g^{-1})\chi_j.
\]
For instance, we have~$\epsilon_1=\rho/|G|$. In general, the coefficient of the trivial representation is~$1/|\rmC_G(g)|$. Let~$n_g$ be the minimal positive integer such that~$n_g\epsilon_g$ is contained in~$\rmR(G)$ or~$0$ if such an integer does not exist. Then the maximum~$\max\{\,n_g\,|\,g\in G\,\}$ is achieved at the identity element, and it is~$n_e=|G|$. In this way we recover~$|G|$ just from the ring~$\rmR(G)$.
\end{proof}

A little more work, and the Adams operations, give the exponent:

\begin{proposition}\label{prop:exponent}
The~$\lambda$--ring structure on the representation ring~$\rmR(G)$ determines the exponent of~$G$.
\end{proposition}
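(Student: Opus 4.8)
The plan is to read off the exponent of~$G$ from the Adams operations, which by Wilkerson's criterion are equivalent data to the $\lambda$--ring structure on~$\rmR(G)$. This family is indexed by all of~$\bbZ$, so in particular the idempotent operation~$\Psi^0$ is part of the structure; since~$\Psi^0(\chi)(g)=\chi(g^0)=\chi(1)$ for all~$g$, it is on~$\rmR(G)$ the operation~$\chi\mapsto\chi(1)\,\chi_1$, that is, the composite of the dimension augmentation~$\rmR(G)\to\bbZ$ with the unit inclusion. I claim that the exponent~$e$ of~$G$ is the smallest positive integer~$k$ for which~$\Psi^k=\Psi^0$ in~$\End(\rmR(G))$; as this description is phrased entirely in terms of the $\lambda$--ring structure, the proposition follows from it. That~$\Psi^e=\Psi^0$ is clear: for every irreducible character~$\chi$ and every~$g\in G$ one has~$\Psi^e(\chi)(g)=\chi(g^e)=\chi(1)$ because~$g^e=1$, so~$\Psi^e(\chi)=\chi(1)\chi_1$.

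For the minimality, I would argue as follows. Suppose~$0<k<e$. Since~$e$ is the least common multiple of the orders of the elements of~$G$, it does not divide~$k$, so there is some~$g\in G$ with~$g^k\neq 1$. Then some irreducible representation~$\rho_j$ satisfies~$\rho_j(g^k)\neq\id$, because the direct sum of all the irreducibles is a faithful representation. Since~$\rho_j(g^k)$ has finite order, the equality~$\chi_j(g^k)=\chi_j(1)$ would force all of its eigenvalues to be~$1$ and hence~$\rho_j(g^k)=\id$, a contradiction; so~$\chi_j(g^k)\neq\chi_j(1)$. Evaluating at~$g$, this gives~$\Psi^k(\chi_j)(g)=\chi_j(g^k)\neq\chi_j(1)=\Psi^0(\chi_j)(g)$, whence~$\Psi^k\neq\Psi^0$. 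This establishes the claim.

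I do not expect a serious obstacle here; the only points requiring care are the bookkeeping around~$\Psi^0$ --- that it genuinely belongs to the $\lambda$--ring structure and is the operation described above --- and the decision to compare~$\Psi^k$ with~$\Psi^0$ rather than with the identity. The latter is essential: the condition~$\Psi^k=\id$ only detects those~$k$ for which~$g^k$ is \emph{conjugate} to~$g$ for every~$g\in G$, and the smallest positive such~$k$ can be strictly smaller than the exponent; for instance~$\Psi^5=\id$ holds on~$\rmR(\frS_3)$, even though~$\frS_3$ has exponent~$6$.
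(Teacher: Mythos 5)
Your proof is correct and follows essentially the same route as the paper: both characterize the exponent as the least positive~$k$ with~$\Psi^k=\Psi^0=\dim$, a condition phrased purely in terms of the~$\lambda$--ring structure. The only difference is in how you detect~$g^k\neq 1$ from character values: the paper evaluates~$\Psi^k$ on the character of the regular representation, which vanishes off the identity, while you use a faithful sum of irreducibles and the root-of-unity eigenvalue argument --- both are standard and equally valid, and your closing remark about why~$\Psi^k=\id$ would be the wrong comparison is a nice touch.
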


\begin{proof}
Let~$e=\exp(G)$ be the exponent of~$G$. Then~$g^{k+e}=g^k$ for all elements~$g$ of~$G$, so that we have~$\Psi^{k+e}=\Psi^k$ for all integers~$k$. 
This shows that the family~\hbox{$\Psi=(\,\Psi^k\,|\,k\in\bbZ\,)$} is periodic, and we claim that~$e$ is its period.

We can use that~$\Psi^0=\dim$ is the dimension function~$\rmR(G)\to\bbZ\to\rmR(G)$. Assume that we have~$\Psi^{k+f}=\Psi^k$ for all integers~$k$. 
If~$\rho$ denotes the character of the regular representation, 
then~$\Psi^f(\rho)=\Psi^0(\rho)=\dim(\rho)$ is constant and equals the order of~$G$. But, on the other hand, we have~$(\Psi^f\rho)(g)=\rho(g^f)$. 
Since the character of the regular representation vanishes away from the neutral element of~$G$, we deduce~$g^f$ is the neutral element of the group~$G$ for all~$g$ so that~$f$ is a multiple of the exponent of~$G$, as claimed.
\end{proof}

\begin{remark}
It follows from Shimizu's work~\cite{Shimizu} that isocategorical groups have the same exponent. This shows that the exponent is an invariant which is also computable from the monoidal category~$\Rep(KG)$ alone, not using its symmetry. It is arguably much easier to use the~$\lambda$--ring structure on~$\rmR(G)$, though.
\end{remark}

%%%

\section{Isocategorical groups and symmetries}\label{isocats}

We have already mentioned in the course of the introduction that the representation category~$\Rep(\frK G)$ of a finite group~$G$, as a symmetric monoidal category, determines the group~$G$ up to isomorphism. Let us be more precise now. 

Deligne~\cite{Deligne} showed that every symmetric monoidal category~$\C$ that satisfies certain finiteness conditions admits a unique symmetric fiber functor~$F\colon \C\to \Rep(\frK)$. (See also Breen's exposition in~\cite{Breen}.) If~$G$ is the group of monoidal autoequivalences of the functor~$F$, then Tannaka reconstruction gives us an equivalence of symmetric monoidal categories~$\C\simeq\Rep(\frK G)$. 
Under this equivalence, the functor~$F$ gives us the forgetful functor~$\Rep(\frK G)\to\Rep(\frK)$. The symmetric monoidal structure is crucial here: it is possible that~$\Rep(\frK G)$ and~$\Rep(\frK G')$ will be equivalent as monoidal categories without the two groups~$G$ and~$G'$ being isomorphic. Stated differently, it is possible that a representation category~$\Rep(\frK G)$ admits a non-canonical symmetry for its monoidal structure which will make it equivalent, as a symmetric monoidal category, to~$\Rep(\frK G')$~(with its canonical symmetry) for a group~$G'$ that is not isomorphic to~$G$.

Two groups~$G$ and~$G'$ are called {\it isocategorical} in case~$\Rep(\frK G)$ and~$\Rep(\frK G')$ are equivalent as monoidal categories. In~\cite{Etingof+Gelaki}, Etingof and Gelaki constructed all examples of non-isomorphic isocategorical finite groups, up to isomorphism. What we add to this here is an explicit description of monoidal equivalences, which we dub {\em Etingof--Gelaki} equivalences, see Definition~\ref{def:EG} below.
 
\begin{example}
 It is shown in~\cite{Goyvaerts+Meir} that the smallest examples of isocategorical groups have order~$64$. From the~267 isomorphism classes of groups of that order, these are the two groups that are named~$\Gamma_{26} a_2$ and~$\Gamma_{26} a_3$ by Hall and Senior~\cite{Hall+Senior}, or{\tt~SmallGroup(64,136)} and{\tt~SmallGroup(64,135)} in~\cite{GAP}, respectively. They both have exponent~$8$ and representations rings of rank~$16$, but the ranks of their Burnside rings differ: they are~$76$ and~$73$, respectively. These groups can also be distinguished by looking at their homology~$\rmH_3(G\,;\,\bbZ)$ or cohomology~$\rmH^3(G\,;\,\bbZ/2)$.
\end{example}

We now review the work~\cite{Etingof+Gelaki} of Etingof and Gelaki that describes a construction of all groups, up to isomorphism, that are isocategorical to a given group~$G$

First of all, we consider a presentation
\begin{equation}\label{eq:extension}
1\longrightarrow A\longrightarrow G\longrightarrow Q\longrightarrow 1
\end{equation}
of the group~$G$ as an extension of some group~$Q$ by an abelian group~$A$. Let us agree to write~\hbox{$A^{\vee}=\Hom(A,\frK^{\times})$} for the character group of the kernel. The quotient group~$Q$ acts on the groups~$A$ and~$A^\vee$, and we can form the split extension
\[
1\longrightarrow A^{\vee}\longrightarrow A^{\vee}\rtimes Q\longrightarrow Q\longrightarrow 1.
\] 
The first differential on the second page of the Lyndon--Hochschild--Serre spectral sequence for that extension is a homomorphism
\[
\rmd_2\colon\rmH^0(Q\,;\,\rmH^2(A^{\vee}\,;\,\frK^{\times}))\longrightarrow\rmH^2(Q\,;\,\rmH^1(A^{\vee}\,;\,\frK^{\times}))= \rmH^2(Q\,;A),
\]
and we need to make this homomorphism explicit.

Let
\[
\alpha\colon A^{\vee}\times A^{\vee}\longrightarrow \frK^{\times}
\]
be a non-degenerate~$2$--cocycle for the character group such that the cohomology class~$[\alpha]$ of the~$2$--cocycle~$\alpha$ is~$Q$--invariant. In formulas, this means~\hbox{$[q(\alpha)]=[\alpha]$} for all elements~$q$ in the quotient group~$Q$. Therefore, we can choose, for each element~$q$ in the quotient group~$Q$, a~$1$--cochain~$z(q)\colon A^{\vee}\to \frK^\times$ such that
\[
\rmd z(q) = \frac{q(\alpha)}{\alpha}
\]
in the standard cochain complex~$\rmC^1(A^{\vee}\,;\,\frK^{\times})\to\rmC^2(A^{\vee}\,;\,\frK^{\times})$. We will work with one such chosen family~$(\,z(q)\,|\,q\in Q\,)$ from now on. We define, for all elements~$p,q$ in~$Q$,
\begin{equation}\label{b}
b(p,q) = \frac{z(pq)}{z(p)p(z(q))}\colon A^{\vee}\to \frK^\times.
\end{equation}
Such a function is a~$1$--cochain, and a direct calculation shows that the differential vanishes on it:~$\rmd(b(p,q))(\phi)=1$ for all~$\phi\in A^{\vee}$, so that~$b(p,q)$ is a~$1$--cocycle, that is a~$1$--dimensional representation~\hbox{$A^{\vee}\to\frK^\times$} of~$A^{\vee}$. 
In other words, the function~$b$ assigns to any pair of elements in~$Q$ an element in~$(A^{\vee})^{\vee}=A$. It turns out that~$b$ is a~$2$--cocycle of~$Q$, and we can think of it as having values in~$A$, that is~$b\in\rmZ^2(Q\,;\,A)$. The differential is given by~$\rmd_2[\alpha]=[b]$, see~\cite[Appendix]{ENOM}, for instance.

Now that we have described a cocycle~$b\in\rmZ^2(Q\,;\,A)$, we can use it to define an isocategorical group~$G_b$. It has the same underlying set as the group~$G$, but its multiplication~$\cdot_b$ is defined by the rule
\begin{equation}\label{eq:G_b}
g\cdot_b h = b(\bar{g},\bar{h})gh,
\end{equation} 
where~$g\mapsto\bar{g}$ is the projection~$G\to Q$.

\begin{theorem}{\upshape(Etingof--Gelaki~\cite[Theorem~1.3]{Etingof+Gelaki})}\label{thm:EG}
Two finite groups~$G$ and~$G'$ are isocategorical if and only if the group~$G'$ is isomorphic to a group of the form~$G_b$ for some~$b$, where the group~$A$ has order~$2^{2m}$ for some~$m$.
\end{theorem}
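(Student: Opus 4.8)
The plan is to follow the Tannakian strategy of Etingof and Gelaki, organised around the explicit cocycle~$b$ of \eqref{b}. First I would translate the categorical statement into one about twists of group algebras. Given a monoidal equivalence $\Rep(\frK G)\to\Rep(\frK G')$, composing it with the canonical forgetful fibre functor of $\Rep(\frK G')$ produces a fibre functor $\omega\colon\Rep(\frK G)\to\Rep(\frK)$ whose group of monoidal automorphisms is~$G'$, so Tannaka reconstruction yields a Hopf-algebra isomorphism $\frK G'\cong(\frK G)^J$ for a twist (gauge transformation) $J\in\frK G\ot\frK G$ that records the discrepancy between~$\omega$ and the standard forgetful functor. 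Conversely, twisting a Hopf algebra leaves the monoidal category of its modules unchanged, so whenever $(\frK G)^J$ is cocommutative it is the group algebra of a group isocategorical to~$G$. The theorem thus reduces to classifying, up to gauge equivalence and $\Aut(G)$, the twists~$J$ for which $(\frK G)^J$ is cocommutative and recognising the resulting groups among the~$G_b$.

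Next I would carry out that classification. Cocommutativity of $(\frK G)^J$ is equivalent to $u:=J^{-1}J_{21}$ (with $J_{21}$ the flip of~$J$) commuting with the diagonal $\Delta(\frK G)=\langle\,g\ot g\mid g\in G\,\rangle$. Feeding in the structure theory of twists of group algebras --- every~$J$ is gauge-equivalent to a non-degenerate twist supported on some subgroup $S\leqslant G$ --- this condition forces~$S$ to be abelian and normal in~$G$ and forces the cohomology class of the defining $2$--cocycle~$\alpha$ on~$S^{\vee}$ to be invariant under $Q=G/S$. Renaming $S=A$ puts us in the set-up of \eqref{eq:extension}: $A$ normal abelian, $Q=G/A$, and $[\alpha]\in\rmH^2(A^{\vee}\,;\,\frK^{\times})$ a $Q$--invariant non-degenerate class, whose non-degeneracy makes the twisted group algebra $\frK_{\alpha}A^{\vee}$ a matrix algebra, so $|A|$ is a perfect square. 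With the $1$--cochains~$z(q)$ trivialising $q(\alpha)/\alpha$ one builds the cochain~$b$ of \eqref{b}, representing $\rmd_2[\alpha]$, and a direct computation of the twisted comultiplication identifies $(\frK G)^J$ Hopf-algebraically with $\frK G_b$, with the multiplication \eqref{eq:G_b}. Tracking gauge equivalences and automorphisms of~$G$ then leaves the isomorphism type of~$G_b$ as the only surviving invariant, so the groups isocategorical to~$G$ are exactly the~$G_b$ built from such data.

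Then I would prove that~$A$ may always be taken to be a $2$--group. Decompose $A=A_2\times A'$ into its $2$--part and odd part; since $A^{\vee}=A_2^{\vee}\times A'^{\vee}$ and~$Q$ preserves each factor, $[\alpha]$ splits as $[\alpha_2]\times[\alpha']$ with each factor $Q$--invariant and non-degenerate, the chosen cochains split accordingly, and $b=b_2b'$ with $b_2\in\rmZ^2(Q\,;\,A_2)$ and $b'\in\rmZ^2(Q\,;\,A')$. On the odd part, every element of~$A'^{\vee}$, and of the finite group of roots of unity generated by the values of~$\alpha'$, has a unique square root, so the $Q$--invariant alternating bicharacter underlying $[\alpha']$ is the commutator pairing of a genuinely $Q$--invariant \emph{bilinear} cocycle; choosing that representative lets one take each $z'(q)$ trivial, whence $b'$ is the trivial cocycle. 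Hence~$b$ may be taken to have values in~$A_2$, so~$G_b$ is built from $(A_2,[\alpha_2])$ alone, and non-degeneracy makes $|A_2|$ a square power of~$2$, that is $2^{2m}$. The reverse implication is immediate: for any datum of this kind the construction gives $\frK G_b\cong(\frK G)^J$, hence $\Rep(\frK G_b)\simeq\Rep(\frK G)$. (When~$A$ itself has odd order this argument shows~$b$ is a coboundary and $G_b\cong G$, the case $m=0$.)

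The substantial inputs, and where I would expect the real work, are two structural facts used above: first, completeness --- that \emph{every} twist of~$\frK G$ is gauge-equivalent to a non-degenerate twist supported on a subgroup, equivalently the classification of fibre functors on $\Rep(\frK G)$, or of module categories over the pointed category $\mathrm{Vec}_G$ of $G$--graded vector spaces; and second, the verification that cocommutativity of $(\frK G)^J$ genuinely forces the supporting subgroup to be abelian and normal with $Q$--invariant defining class, not merely abelian. The prime~$2$ enters the picture only through the breakdown of the square-root argument of the third step for $2$--groups, and it is exactly this breakdown that allows non-isomorphic isocategorical pairs to exist.
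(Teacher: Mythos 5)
The paper does not actually prove this theorem: it is quoted from Etingof--Gelaki~\cite[Thm.~1.3]{Etingof+Gelaki}, and Sections~3--4 merely assemble the surrounding ingredients (the construction of~$G_b$, the classification of fibre functors on~$\Rep(\frK G)$ by pairs~$(A,[\psi])$, and the criterion for the reconstructed Hopf algebra to be a group algebra). Your sketch is a faithful reconstruction of the Etingof--Gelaki argument along exactly those lines --- twists of~$\frK G$, the Movshev--Davydov classification, the cocommutativity analysis forcing~$A$ abelian normal with $Q$--invariant nondegenerate class, and the odd-order square-root reduction to a $2$--group of square order --- so it takes the same route as the cited source, and the two structural inputs you flag as the real work are precisely the ones the paper itself imports without proof.
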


Etingof and Gelaki classified all pairs of isocategorical groups, but they did not describe explicitly the categorical equivalences. We do this here; we describe explicitly the monoidal equivalence 
\begin{equation}\label{F_b}
F_b\colon \Rep(\frK G)\ra\Rep(\frK G_b).
\end{equation}
arising from their construction. 
For this, the normal subgroup~$A$ does not have to be a $2$--group, and the cocycle~$\alpha$ does not have to be non-degenerate.
However, we will assume that~$\alpha$ is non-degenerate, since if~$\alpha$ is degenerate we can always reduce to a subgroup~$B$ of~$A$ that will admit such a non-degenerate~$2$--cocycle. Without further ado, we describe~\eqref{F_b}. If~$V$ is a~$G$--representation, then we set~$F_b(V)=V$, the same underlying vector space, but with the action of an element~$g$ in~$G_b$ given as follows: if
\begin{equation}\label{eq:decomposition}
V=\bigoplus_{\phi\in A^{\vee}} V(\phi)
\end{equation}
is the isotypical decomposition of~$V$ as an~$A$--representation, then
\begin{equation}\label{eq:action}
g\cdot_b v = \frac1{z(\bar{g})(\bar{g}(\phi))}gv
\end{equation} 
for the elements~$v\in V(\phi)$. Then~$F_b$ is an equivalence of categories. We can define a monoidal isomorphism~\hbox{$F_b(V\ot W)\ra F_b(V)\ot F_b(W)$} by
\[
v\ot w\longmapsto \frac1{\alpha(\phi_1,\phi_2)}v\ot w
\] 
where~$v\in V(\phi_1)$ and~$w\in W(\phi_2)$. This ends our description of the monoidal equivalence~$F_b$~(see also~\cite{Goyvaerts+Meir}).

\begin{definition}\label{def:EG}
We will call a functor of the form~$F_b$ an \textit{Etingof--Gelaki equivalence}.
\end{definition}

\begin{remark}\label{correspondences}
In order to have an equivalence, it is not necessary that~$A$ is a~$2$--group. If~$b'$ is the resulting cocycle from a different choice of the family~\hbox{$(\,z(q)\,|\,q\in Q\,)$}, 
then~$[b]=[b']$ and there exists a canonical isomorphism of groups~$\rho\colon G_{b'}\cong G_{b}$. The functor~$F_{b'}$ will then be isomorphic to the composition
\[
\Rep(\frK G)\stackrel{F_b}{\longrightarrow} \Rep(\frK G_b)\stackrel{\rho^*}{\longrightarrow} \Rep(\frK G_{b'}).
\]
In particular, if~$b$ is trivial~(which is always the case, for instance, when the order of~$A$ is odd), we will get in this way an autoequivalence of~$\Rep(\frK G)$. 
However, in case~$A$ is non-trivial, the resulting autoequivalence will be monoidal but not symmetric.
\end{remark}

The induced symmetry on the category~$\Rep(\frK G)$ that is obtained from conjugating the symmetry on~$\Rep(\frK G_b)$ with the equivalence~\hbox{$\Rep(\frK G)\to \Rep(\frK G_b)$} will be
\begin{equation}\label{eq:symmetry}
v\ot w \mapsto \frac{\alpha(\phi_1,\phi_2)}{\alpha(\phi_2,\phi_1)}w\ot v.
\end{equation}
Notice that even if~$b$ is a trivial cocycle we might get a new non-trivial symmetry on the category~$\Rep(\frK G)$. 

In the next section, we prove that these are all possible symmetric monoidal structures that arise from equivalences between representation categories of finite groups. We shall also classify all monoidal autoequivalences of representation categories of finite groups.

%%%

\section{Symmetries and monoidal autoequivalences}\label{sec:sym_monoidal_auto}

In order to state our results on the classification of symmetries, we will recall here first some notions and results from the theory of fusion categories.
Recall that a {\em fusion category} over~$\frK$ is a semi-simple rigid monoidal~$\frK$--linear category with finitely many simple objects and finite-dimensional Hom spaces~\cite{ENO}. 
A guiding example to keep in mind is the representation category~$\Rep(\frK G)$ of a finite group~$G$. A \textit{fiber functor} on a fusion category is a~$\frK$--linear exact faithful monoidal functor~\hbox{$T\colon\D\to\Rep(\frK)$}.~(The target category~$\Rep(\frK)$ is just the category of vector spaces over the ground field~$\frK$.) The endomorphism algebra~$\End_\frK(T)$ has a canonical structure of a Hopf algebra, and the functor~$T$ induces an equivalence of monoidal categories between~$\D$ and~$\Rep(\End_\frK(T))$. 
This is what is known as {\it Tannaka reconstruction} for Hopf algebras.

If now~$\D_1$ and~$\D_2$ are two fusion categories, and~$F\colon\D_1\to \D_2$ is a monoidal functor between these, and~$T\colon\D_2\to \Rep(\frK)$ is a~$\frK$--linear monoidal functor, 
then~$F$ induces a Hopf algebra morphism~$\End_\frK(T)\to\End_\frK(TF)$ and the diagram 
\[
\xymatrix{
\D_1 \ar[r] \ar[d] & \D_2\ar[d] \\ 
\Rep(\End(TF))\ar[r] & \Rep(\End(T))
}
\]
is commutative up to a natural equivalence, where the lower horizontal arrow is restriction of representations. 

Let us specialize these considerations to the case where~$H$ is a finite-dimensional semi-simple Hopf algebra over the field~$\frK$, and where~$T\colon\Rep(H)\to\Rep(\frK)$ is the forgetful functor. 
Then a monoidal autoequivalence~\hbox{$L\colon\Rep(H)\to \Rep(H)$} is given by a pair~$(T',\Phi)$, where the first entry~$T'\colon\Rep(H)\to\Rep(\frK)$ is a~$\frK$--linear monoidal functor 
(which will become the composition~$TL$) and the second entry is an isomorphism~\hbox{$\Phi\colon H\cong\End_\frK(T')$} of Hopf algebras. 
One can show that~$(T',\Phi)$ will define the identity autoequivalence if and only if~$T'\cong T$, and under this isomorphism it holds that~$\Phi$ corresponds to an automorphism of~$H$ which arises from conjugation by a group-like element.

Consider now the case where the Hopf algebra~$H=\frK G$ is the group algebra of a finite group~$G$. Following the works of Movshev~\cite{Movhshev}, Davydov~\cite{Davydov}, Ostrik~\cite{Ostrik1,Ostrik2}, and Natale~\cite{Natale}, we know that fiber functors~\hbox{$F\colon\Rep(\frK G)\to \Rep(\frK)$}
are in one-to-one correspondence with pairs of the form~$(A,[\psi])$ where~$A$ is a subgroup of~$G$ and~$[\psi]\in \rmH^2(A\,;\,\frK^{\times})$ is a cohomology class of a non-degenerate~$2$--cocycle.
Two pairs~$(A_1,[\psi_1])$ and~$(A_2,[\psi_2])$ define isomorphic functors if and only if they differ by conjugation in~$G$.
For instance, the pair~$(1,[1])$ consisting of the trivial subgroup and the trivial~$2$--cocycle on it corresponds to the forgetful functor~$\Rep(\frK G)\to \Rep(\frK)$.
Furthermore, the Hopf algebra that one receives from Tannaka reconstruction is again a group algebra if and only if the subgroup~$A$ is abelian and normal, 
and the cohomology class of the~$2$--cocycle~$\psi$ is invariant under the action of the group~$G$.

In the case of abelian groups the cocycle~$\psi$ amounts to a skew-symmetric pairing on~$A$, given by 
\[
(a,b)\longmapsto\frac{\psi(a,b)}{\psi(b,a)}.
\] 
In case~$\psi$ is non-degenerate, this gives us an isomorphism~$S\colon A\cong A^{\vee}$. 
The~$2$--cocycle~$(S^{-1})^*(\psi)$ is then exactly the~$2$--cocycle which appears in the Etingof--Gelaki classification of isocategorical groups mentioned in the previous section. 
This discussion leads to the following results:

\begin{theorem}\label{thm41}
Assume that~$\sigma$ is a symmetric monoidal structure on~$\Rep(\frK G)$. Assume also that the exterior powers~$\Lambda^nV$~(defined using the symmetry) satisfy~\hbox{$\Lambda^nV=0$} for all~$V$ and large enough~$n$~(depending on~$V$).
Then there exists a normal abelian subgroup~$A$ of~$G$ and a~$G/A$--invariant non-degenerate cohomology class of a~$2$--cocycle 
$\alpha\colon A^{\vee}\times A^{\vee}\to \frK^{\times}$ such that the symmetry is given by the formula~\eqref{eq:symmetry}:
\[
v\ot w\longmapsto\frac{\alpha(\phi_1,\phi_2)}{\alpha(\phi_2,\phi_1)}w\ot v,
\]
where, in the notation of~\eqref{eq:decomposition}, the vector~$v$ is in~$V(\phi_1)$ and~$w$ is in~$V(\phi_2)$.
\end{theorem}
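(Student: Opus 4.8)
The plan is to use Deligne's reconstruction theorem to convert the abstract datum ``$\sigma$'' into a genuine finite group~$G'$ together with a symmetric monoidal equivalence~$(\Rep(\frK G),\sigma)\simeq(\Rep(\frK G'),\sigma_{\mathrm{can}})$, then to read off the subgroup~$A$ and the cocycle from the classification of fibre functors recalled above, and finally to compare the resulting fibre functor with the one produced by an Etingof--Gelaki equivalence built from the same data, thereby pinning down~$\sigma$ to the formula~\eqref{eq:symmetry}. Concretely, the first step: the category~$\Rep(\frK G)$ equipped with the new symmetry~$\sigma$ is a symmetric fusion category over~$\frK$, and the hypothesis that~$\Lambda^nV$ (formed with respect to~$\sigma$) vanishes for large~$n$ is exactly the condition that excludes the ``super'' branch of Deligne's dichotomy---it forbids an odd invertible object, and more generally forces every object to behave as an honest, not a super, representation. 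Hence~$(\Rep(\frK G),\sigma)$ is Tannakian and admits a symmetric fibre functor to~$\Rep(\frK)$, unique up to monoidal isomorphism; since there are only finitely many simple objects and~$\operatorname{char}\frK=0$, the affine group scheme reconstructed from it is a finite group~$G'$, and we obtain a symmetric monoidal equivalence~$\Psi\colon(\Rep(\frK G),\sigma)\xrightarrow{\sim}(\Rep(\frK G'),\sigma_{\mathrm{can}})$.

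Next I would analyse the monoidal functor~$T\colon\Rep(\frK G)\to\Rep(\frK)$ obtained by composing~$\Psi$ with the forgetful functor of~$\Rep(\frK G')$. By the Movshev--Davydov--Ostrik--Natale classification recalled before the statement,~$T$ corresponds to a pair~$(A,[\psi])$, with~$A\leqslant G$ and~$[\psi]\in\rmH^2(A\,;\,\frK^{\times})$ the class of a non-degenerate~$2$--cocycle, well defined up to conjugation in~$G$. Since~$\Psi$ is an equivalence, the Hopf algebra Tannaka-reconstructed from~$T$ is~$\End_\frK(T)\cong\frK G'$, a group algebra; by the cited criterion this forces~$A$ to be abelian and normal in~$G$ and~$[\psi]$ to be~$G$--invariant, equivalently~$G/A$--invariant as inner automorphisms act trivially on cohomology. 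Transporting~$\psi$ along the isomorphism~$S\colon A\cong A^{\vee}$ determined by its skew pairing, I obtain a non-degenerate~$G/A$--invariant~$2$--cocycle~$\alpha=(S^{-1})^{*}\psi\colon A^{\vee}\times A^{\vee}\to\frK^{\times}$, which is precisely the object appearing in the Etingof--Gelaki picture of Section~\ref{isocats}.

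Finally I would feed the extension~$1\to A\to G\to G/A\to 1$ and the cocycle~$\alpha$ into the construction of Section~\ref{isocats}, obtaining a cocycle~$b\in\rmZ^2(G/A\,;\,A)$ and an Etingof--Gelaki equivalence~$F_b\colon\Rep(\frK G)\to\Rep(\frK G_b)$. Composing~$F_b$ with the forgetful functor of~$\Rep(\frK G_b)$ yields a monoidal functor~$\Rep(\frK G)\to\Rep(\frK)$ whose classifying pair is again~$(A,[\psi])$, hence, by the classification, this composite is isomorphic to~$T$ as a monoidal functor. On the other hand, the forgetful functor of~$\Rep(\frK G_b)$ is symmetric, and by the computation leading to~\eqref{eq:symmetry} the functor~$F_b$ carries the symmetry~\eqref{eq:symmetry} on~$\Rep(\frK G)$ to the canonical symmetry on~$\Rep(\frK G_b)$; so the composite is a symmetric functor from~$(\Rep(\frK G),\text{symmetry }\eqref{eq:symmetry})$ to~$\Rep(\frK)$. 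A faithful monoidal functor to~$\Rep(\frK)$ with its standard symmetry is compatible with at most one symmetry on its source; since~$T$ is compatible with~$\sigma$ (via~$\Psi$) and is monoidally isomorphic to a functor compatible with~\eqref{eq:symmetry}, we conclude that~$\sigma$ is given by~\eqref{eq:symmetry}.

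I expect the main obstacle to be the matching step in the last paragraph: checking that the fibre functor attached to the Etingof--Gelaki datum~$\alpha$ really has classifying pair~$(A,[\psi])$ in the normalisation of the Movshev--Davydov--Ostrik--Natale classification---equivalently, that the Etingof--Gelaki machine, when fed the data extracted from~$T$, reproduces~$T$ itself rather than some twist of it. This amounts to a careful comparison of the two explicit descriptions of fibre functors on~$\Rep(\frK G)$ (one via subgroups-with-cocycle, one via the isotypical grading and the scalar~$\alpha(\phi_1,\phi_2)^{-1}$); everything else in the argument is an assembly of inputs already available, namely Deligne's theorem, the classification of fibre functors, and the formula~\eqref{eq:symmetry}.
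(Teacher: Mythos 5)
Your proposal is correct and follows essentially the same route as the paper's own (very terse) proof: apply Deligne's theorem to obtain a symmetric equivalence with some~$(\Rep(\frK G'),\sigma')$, invoke the classification of fibre functors to identify the underlying monoidal equivalence as an Etingof--Gelaki equivalence, and read off the symmetry from formula~\eqref{eq:symmetry}. Your version simply spells out the intermediate steps (exclusion of the super case, extraction of~$(A,[\psi])$, the matching of fibre functors, and the uniqueness of a symmetry compatible with a faithful symmetric functor to~$\Rep(\frK)$) that the paper leaves implicit.
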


\begin{proof}
Let us first assume that the symmetry~$\sigma$ satisfies the condition on exterior powers. From Deligne's work~\cite{Deligne}
we know that there exists an equivalence of symmetric monoidal categories~\hbox{$(\Rep(\frK G),\sigma)\to(\Rep(\frK G'),\sigma ')$} where~$\sigma '$ is the canonical symmetry on the category~$\Rep(\frK G')$ for some finite group~$G'$. 
By the above discussion and the classification of fiber functors on~$\Rep(\frK G)$ we know that every monoidal equivalence between~$\Rep(\frK G)$ and~$\Rep(\frK G')$ is an Etingof--Gelaki equivalence. 
This implies that the symmetric monoidal structure is of the claimed form.
\end{proof}

\begin{remark} 
The finiteness condition~($\Lambda^nV=0$ for all~$V$ and large enough~$n$) is necessary. Indeed, there are symmetries on monoidal categories of the form~$\Rep(\frK G)$ that do not satisfy this~(use \cite[Ex.~0.4(i)]{Deligne2}: for these symmetries, symmetric powers and exterior powers interchange, so that the latter are all non-zero). 
\end{remark}

\begin{theorem}\label{thm42}
Any monoidal autoequivalence of~$\Rep(\frK G)$ is given by a triple~$(A,\alpha,\phi)$ consisting of an abelian normal subgroup~$A$ of~$G$, a~$G/A$--invariant non-degenerate~$2$--cocycle~$\alpha$ on~$A^{\vee}$ and an isomorphism~$\phi\colon G\to G_b$, where~$b$ is the~$2$--cocycle from the Etingof--Gelaki construction. 
Moreover, two tuples~$(A,\alpha,\phi)$ and~$(A',\alpha',\phi')$ will define isomorphic equivalences if and only if~$A=A'$,~$\alpha=\alpha'$ and~$\phi$ differs from~$\phi'$ by conjugation with an element of the group~$G$.
\end{theorem}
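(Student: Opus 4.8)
\emph{Proof plan.} The strategy is to run the machinery used for Theorem~\ref{thm41}, keeping track of the extra datum that distinguishes an autoequivalence from its underlying fiber functor. By the discussion preceding the theorem, a monoidal autoequivalence $L\colon\Rep(\frK G)\to\Rep(\frK G)$ is the same as a pair $(T',\Phi)$, where $T'=TL\colon\Rep(\frK G)\to\Rep(\frK)$ is a $\frK$--linear monoidal functor and $\Phi\colon\frK G\cong\End_\frK(T')$ is an isomorphism of Hopf algebras; since $T$ is exact and faithful and $L$ is an equivalence, $T'$ is exact and faithful, hence a fiber functor on $\Rep(\frK G)$. First I would feed $T'$ into the Movshev--Davydov--Ostrik--Natale classification, getting a pair $(A,[\psi])$ with $A\leqslant G$ and $[\psi]$ the class of a non-degenerate $2$--cocycle on $A$. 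The mere existence of $\Phi$ forces $\End_\frK(T')$ to be a group algebra, so by the criterion recalled above the subgroup $A$ is abelian and normal, the class $[\psi]$ is $G$--invariant, and $\End_\frK(T')\cong\frK G_b$, where $b\in\rmZ^2(G/A\,;\,A)$ is the Etingof--Gelaki cocycle attached to the non-degenerate $G/A$--invariant $2$--cocycle $\alpha=(S^{-1})^*\psi$ on $A^{\vee}$.

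To extract the remaining datum, I would compose $\Phi$ with the identification $\End_\frK(T')\cong\frK G_b$ to obtain an isomorphism of Hopf algebras $\frK G\cong\frK G_b$; since the group-like elements of the group algebra of a finite group are exactly the group elements, this is the $\frK$--linear extension of a unique group isomorphism $\phi\colon G\to G_b$, and we have produced the triple $(A,\alpha,\phi)$. Conversely, such a triple reconstructs $T'$ as the fiber functor attached to $(A,S^*\alpha)$ and $\Phi$ as the linear extension of $\phi$, hence an autoequivalence; explicitly it is $\phi^{*}\circ F_b$, the Etingof--Gelaki equivalence~\eqref{F_b} followed by restriction along $\phi$. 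One should note that $b$, and hence the group $G_b$, depends on the auxiliary choice of the cochains $z(q)$; by Remark~\ref{correspondences} a different choice yields canonically isomorphic data, so the triple is well defined once that choice is fixed.

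For the last assertion I would use that $L\cong L'$ holds if and only if the associated pairs are isomorphic, that is, there is a monoidal isomorphism $\nu\colon T'\to T''$ with $\nu_*\Phi=\Phi'$. Such a $\nu$ exists exactly when $(A,[\psi])$ and $(A',[\psi'])$ differ by conjugation in $G$; since $A$ and $A'$ are normal and the classes are $G$--invariant, this means $A=A'$ and $[\psi]=[\psi']$, equivalently $A=A'$ and $\alpha=\alpha'$ as cohomology classes. Granting this, the monoidal isomorphisms $T'\to T''$ form a torsor under $\Aut^{\otimes}(T')\cong G_b$, and unwinding $\nu_*\Phi=\Phi'$ shows that $\phi$ and $\phi'$ differ by an inner automorphism of $G_b$; because $\phi$ is an isomorphism, post-composing with conjugation by $h\in G_b$ is the same as pre-composing with conjugation by $\phi^{-1}(h)\in G$, so this says exactly that $\phi$ and $\phi'$ differ by conjugation with an element of $G$. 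The special case $L=L'=\id$, where $A$ and $\alpha$ are trivial and $\phi$ is an inner automorphism, recovers the identity-autoequivalence criterion quoted just before the theorem.

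I expect the bookkeeping in the last paragraph to be the main obstacle: one must upgrade the structural correspondence $L\leftrightarrow(T',\Phi)$ from the statement about the identity autoequivalence recalled in the text to a genuine equivalence of groupoids, and then verify that the two a priori independent indeterminacies of $\phi$ --- inner automorphisms of $G_b$ arising from $\Aut^{\otimes}(T')$, and conjugations by group-like elements acting on $\frK G$ --- are identified under $\phi$. The remaining points (that $T'=TL$ is really a fiber functor, that Hopf algebra isomorphisms between group algebras come from group isomorphisms, and the independence from the choice of the $z(q)$) are routine.
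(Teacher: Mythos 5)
Your proposal is correct and follows exactly the route the paper intends: the paper's own ``proof'' of Theorem~\ref{thm42} is the single sentence that it is similar to the proof of Theorem~\ref{thm41}, i.e.\ it relies on the Tannaka-reconstruction dictionary $L\leftrightarrow(T',\Phi)$, the Movshev--Davydov--Ostrik--Natale classification of fiber functors, and the group-algebra criterion, all recalled in Section~\ref{sec:sym_monoidal_auto}. You have simply written out the details (including the identification of the two indeterminacies of $\phi$ with conjugation in $G$) that the paper leaves implicit, so there is nothing to object to.
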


\begin{proof}
The proof of this is similar to the one before.
\end{proof}

\begin{remark}
There is a connection between the group of monoidal autoequivalences of the fusion category~$\Rep(\frK G)$ that appears here and the Brauer--Picard group~$\BrPic(\Rep(\frK G))$ that has been introduced in~\cite{ENOM}. We have a natural homomorphism 
\[
\Phi\colon\Aut_{\ot}(\Rep(\frK G))\longrightarrow\BrPic(\Rep(\frK G))
\]
of groups which sends an automorphism~$\psi$ to the quasi-trivial bimodule~$\Rep(\frK G)_{\psi}$. 
The kernel of this homomorphism contains all autoequivalences that are given~(up to isomorphism) by conjugation with an invertible object in the category~$\Rep(\frK G)$. 
Since the category~$\Rep(\frK G)$ is symmetric, the homomorphism~$\Phi$ is in fact injective,
and we can consider the group~$\Aut_{\ot}(\Rep(\frK G))$ as a subgroup of the Brauer--Picard group~$\BrPic(\Rep(\frK G))$. The image of the homomorphism~$\Phi$ is denoted by~$\Out(\Rep(\frK G))$.
An element in the Brauer--Picard group is given by a module category~$\mathcal{M}$ over~$\Rep(\frK G)$ together with an equivalence between~$\Rep(\frK G)$ and the dual~$\Rep(\frK G)_{\mathcal{M}}$.
Equivalence classes of module categories over~$\Rep(\frK G)$ for which the dual is equivalent to~$\Rep(\frK G')$ for some group~$G'$ are given by pairs~$(A,\psi)$ where~$A$ is an abelian normal subgroup,
and~$\psi$ is a~$2$--cocycle, not necessarily non-degenerate.
We see that the image of~$\Phi$ contains all the bimodule categories in which the cocycle~$\psi$ is non-degenerate~(and for which the resulting group is isomorphic to~$G$).
For a deeper study of the Brauer--Picard group of~$\Rep(\frK G)$ we refer the reader to the paper \cite{NikRie}. 

Note also that we only describe the elements of the~(categorical) automorphism group of~$\Rep(\frK G)$.
We do not explain the composition. That is a different story, and it is the core difficulty in many calculations done for the Brauer--Picard group. See~\cite{Lentner+Priel}, which is related to that problem.
\end{remark}

%%%

\section{Preservation of the odd Adams operations}\label{sec:proof}

The purpose of this section is to prove Theorem~\ref{oddoperations}.

As we have already noted in the proof of Theorem~\ref{thm41}, the discussion in Section~\ref{sec:sym_monoidal_auto} shows that every monoidal equivalence between representation catgegories is an Etingof--Gelaki equivalence. Let us, therefore, begin with the following statement.

\begin{lemma}
Assume that we have a short exact sequence~$1\to A\to G\to Q\to 1$, a non-degenerate~$2$--cocycle~$[\alpha]\in \rmH^2(A^{\vee}\,;\,\frK^{\times})^Q$, and~$b\colon Q\times Q\to A$ the resulting~$2$--cocycle from the Etingof--Gelaki construction.
If~$b'\colon Q\times Q\to A$ is the resulting~$2$--cocycle from a different choice of family~$(\,z(q)\,|\,q\in Q\,)$, then for every integer~$k$ the functor~$F_b$ preserves~$\Psi^k$ if and only if the functor~$F_{b'}$ preserves~$\Psi^k$.
\end{lemma}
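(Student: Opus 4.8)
The plan is to use the observation, already recorded in Remark~\ref{correspondences}, that a different choice of the family $(\,z(q)\,|\,q\in Q\,)$ produces a cocycle $b'$ cohomologous to $b$, together with a canonical isomorphism of groups $\rho\colon G_{b'}\cong G_{b}$ such that $F_{b'}$ is isomorphic (as a monoidal functor) to the composition $\rho^*\circ F_b\colon \Rep(\frK G)\to\Rep(\frK G_b)\to\Rep(\frK G_{b'})$. The key point is that a group isomorphism $\rho\colon G_{b'}\to G_b$ induces a \emph{symmetric} monoidal equivalence $\rho^*$, and in particular the induced ring isomorphism $\rho^*\colon\rmR(G_b)\to\rmR(G_{b'})$ is an isomorphism of $\lambda$--rings, hence commutes with every $\Psi^k$. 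So whatever $F_b$ does to the Adams operations, post-composing with $\rho^*$ does not change it.

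Concretely, first I would spell out that on Grothendieck rings the functor $F_{b'}$ induces (up to the canonical identifications $F_b(V)=V$ on underlying vector spaces) the composite $\rho^*\circ (F_b)_*$, where $(F_b)_*\colon\rmR(G)\to\rmR(G_b)$ and $\rho^*\colon\rmR(G_b)\to\rmR(G_{b'})$. Then I would invoke naturality of the Adams operations with respect to the group homomorphism $\rho$ (Proposition~\ref{prop:char}, or just the defining formula $\Psi^k_G(\chi)(g)=\chi(g^k)$ transported along $\rho$): we have $\rho^*\circ\Psi^k_{G_b}=\Psi^k_{G_{b'}}\circ\rho^*$. Combining, $F_{b'}$ preserves $\Psi^k$, i.e.\ $(F_{b'})_*\circ\Psi^k_G=\Psi^k_{G_{b'}}\circ(F_{b'})_*$, if and only if $\rho^*\circ(F_b)_*\circ\Psi^k_G=\Psi^k_{G_{b'}}\circ\rho^*\circ(F_b)_*=\rho^*\circ\Psi^k_{G_b}\circ(F_b)_*$, and since $\rho^*$ is an isomorphism this is equivalent to $(F_b)_*\circ\Psi^k_G=\Psi^k_{G_b}\circ(F_b)_*$, i.e.\ to $F_b$ preserving $\Psi^k$. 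That is exactly the claimed equivalence.

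There is one thing to be careful about: one should either cite Remark~\ref{correspondences} for the existence of $\rho$ and the isomorphism $F_{b'}\cong\rho^*\circ F_b$, or verify it directly. If verifying directly, one writes $z'(q)=z(q)\,c(q)$ for a suitable $1$--cochain $c(q)\colon A^{\vee}\to\frK^{\times}$ (this is possible precisely because both $z(q)$ and $z'(q)$ trivialize $q(\alpha)/\alpha$), computes that $b'(p,q)=b(p,q)\cdot\rmd c(p,q)$ with $\rmd c$ a coboundary in $\rmZ^2(Q\,;\,A)$, reads off $\rho\colon G_{b'}\to G_b$ from the $1$--cochain trivializing $b'/b$ (explicitly $\rho(g)=c(\bar g)^{-1}(?)\cdot g$ on underlying sets, with the $A$--valued correction determined by $c$), and checks that the comparison isomorphism $F_{b'}(V)\to\rho^*F_b(V)$ is the scalar given by $c$ on each isotypic piece $V(\phi)$. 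I expect this bookkeeping to be the main obstacle — not because it is deep, but because one must keep straight three cochains ($z$, $z'$, $c$) living on $A^{\vee}$ and two cocycles ($b$, $b'$) living on $Q$ and match the categorical comparison isomorphism to them; conceptually, though, the whole lemma is just ``$F_{b'}$ and $F_b$ differ by a symmetric monoidal equivalence, and symmetric monoidal equivalences are invisible to the Adams operations.''
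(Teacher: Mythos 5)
Your proposal is correct and follows essentially the same route as the paper: both reduce to Remark~\ref{correspondences} (that $F_{b'}\cong\rho^*\circ F_b$ for a group isomorphism $\rho\colon G_{b'}\cong G_b$) and then observe that the symmetric monoidal equivalence $\rho^*$ commutes with all Adams operations, so preservation of $\Psi^k$ transfers between $F_b$ and $F_{b'}$. The extra bookkeeping you sketch for verifying the remark directly is not needed, since the paper simply cites the remark.
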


\begin{proof}
Recall from Remark \ref{correspondences}: if~$b'$ is the resulting cocycle from a different choice of the family~\hbox{$(\,z(q)\,|\,q\in Q\,)$}, 
then~$[b']=[b]$, there exists a canonical isomorphism of groups~\hbox{$\rho\colon G_{b'}\cong G_{b}$}, and the functor~$F_{b'}$ is isomorphic to the composition
\[
F_{b'}\colon\Rep(\frK G)\stackrel{F_b}{\longrightarrow} \Rep(\frK G_b)\stackrel{\rho^*}{\longrightarrow} \Rep(\frK G_{b'}).
\]
The result follows from this and the fact that the symmetric monoidal equivalence~$\rho^*$ preserves all Adams operations: the Adams operations are determined by the~$\lambda$--ring structure, 
and this structure in turn is determined by the symmetric monoidal structure.  
\end{proof}

We can~(and will) therefore assume that we are dealing with an Etingof--Gelaki equivalence between the finite groups~$G$ and~$G_b$ which arises from 
an extension~\hbox{$A\to G\to Q$}, 
a~$Q$--invariant cohomology class~$[\alpha]$ in~$\rmH^2(A^{\vee}\,;\,\frK^{\times})$, and a~$2$--cocycle~$b$ as above. The group~$A$ is a~$2$--group. 

\begin{lemma}\label{lem:z}
We can choose the functions~$z(q)\colon A^{\vee}\to \frK^\times$ so that we have~\hbox{$z(q)(\phi)=1$} whenever~$q(\phi)=\phi$.
\end{lemma}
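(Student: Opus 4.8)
The plan is to choose the cochains $z(q)$ consistently on orbits of the $Q$--action on $A^{\vee}$, exploiting the freedom to multiply $z(q)$ by any $1$--cocycle (character) of $A^{\vee}$ without changing $\rmd z(q) = q(\alpha)/\alpha$. First I would record the two constraints: for a fixed $q$, the defining equation $\rmd z(q) = q(\alpha)/\alpha$ determines $z(q)$ only up to a character $A^{\vee}\to\frK^{\times}$, i.e.\ up to an element of $(A^{\vee})^{\vee}=A$; and the condition we want, $z(q)(\phi)=1$ for every $\phi$ fixed by $q$, is a condition only on the restriction of the character-ambiguity to the subgroup $(A^{\vee})^{q}$ of $q$--fixed points. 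So the real question is whether the given $z(q)$, restricted to $(A^{\vee})^{q}$, is itself a character that we can cancel.

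The key step is to check that $z(q)|_{(A^{\vee})^{q}}$ is a homomorphism. On the fixed subgroup $(A^{\vee})^{q}$ we have $q(\alpha)(\phi_1,\phi_2)=\alpha(q\phi_1,q\phi_2)=\alpha(\phi_1,\phi_2)$, so the coboundary $\rmd z(q)$ vanishes identically on $(A^{\vee})^{q}\times(A^{\vee})^{q}$; that is precisely the statement that $z(q)$ is multiplicative on $(A^{\vee})^{q}$, hence a character of that subgroup. Since $(A^{\vee})^{q}$ is a direct summand of the finite abelian group $A^{\vee}$ (or, more concretely, since $A^{\vee}$ is a finite abelian group and restriction of characters is surjective), this character extends to a character $\widetilde{z}\colon A^{\vee}\to\frK^{\times}$. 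Then I would replace $z(q)$ by $z(q)\cdot\widetilde{z}^{\,-1}$: the coboundary is unchanged because $\widetilde{z}^{\,-1}$ is a cocycle, and the new $z(q)$ restricts to the trivial character on $(A^{\vee})^{q}$, which is exactly $z(q)(\phi)=1$ whenever $q(\phi)=\phi$.

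Finally I would note that this adjustment can be carried out independently for each $q\in Q$, since the defining conditions on the family $(z(q)\mid q\in Q)$ do not couple different values of $q$ (the cocycle $b$ is then whatever results; the previous lemma guarantees that changing the family does not affect whether $\Psi^k$ is preserved). I would also remark that one may take $z(1)=1$, which is consistent with the normalization since $1(\phi)=\phi$ for all $\phi$.

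The main obstacle is the extension step: one must be sure that a character of the subgroup $(A^{\vee})^{q}$ genuinely extends to a character of all of $A^{\vee}$. This is where finiteness and the target $\frK^{\times}$ being divisible (so that $\Hom(-,\frK^{\times})$ is exact on abelian groups) enter, and it is worth stating explicitly rather than glossing over, even though the underlying fact is standard. Everything else is a direct manipulation of the cochain formulas already set up in Section~\ref{isocats}.
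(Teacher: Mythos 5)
Your proof is correct and follows essentially the same route as the paper's: the restriction of $z(q)$ to the fixed subgroup $(A^{\vee})^{q}$ is a character because $q(\alpha)/\alpha$ is trivial there, and one cancels it after extending to all of $A^{\vee}$ using the divisibility of $\frK^{\times}$, independently for each $q$. The only flaw is the parenthetical claim that $(A^{\vee})^{q}$ is a direct summand of $A^{\vee}$, which is false in general (consider the fixed points of inversion on $\Z/4$); but your alternative justification via divisibility of $\frK^{\times}$ is the correct one and is exactly what the paper uses.
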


\begin{proof}
For an element~$q\in Q$, pick any function~$z(q)$ that satisfies the equation~\hbox{$\rmd z(q) = q(\alpha)/\alpha$}. The restriction of the function~$z(q)$ to the~$q$--invariant subgroup
\[
B(q)=(A^{\vee})^{q}
\]
is then a~$1$--dimensional representation of the abelian group~$B(q)$: we calculate
\[
\rmd(z(q)|{B(q)}) = \frac{q(\alpha|{B(q)})}{\alpha|{B(q)}} = 1,
\]
by invariance. Since the field~$\frK$ is assumed to be algebraically closed, its group of units is divisible, so that any homomorphism~\hbox{$B(q)\to\frK^\times$} can be extended to~$A^{\vee}$. 
By multiplying~$z(q)$ with the inverse of an extension, we do not change the~$2$--cocycle~$\rmd z(q)$, but we assure that we have~$z(q)|{B(q)}=1$, as desired. 
\end{proof}

From now on we assume that~$z(q)(\phi)=1$ whenever~$q(\phi)=\phi$. 
Under this assumption, we can prove the following lemma:

\begin{lemma}\label{lem:previous}
The isomorphism~$\rmR(G)\cong\rmR(G_b)$ induced by~$F_b$ is the identity: if~$\chi$ is the character of a representation~$V$, and~$\chi_b$ is the character of the representation~$F_b(V)$, then~$\chi_b=\chi$.
\end{lemma}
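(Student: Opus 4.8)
The plan is to compute the character $\chi_b$ of $F_b(V)$ directly from the definition of the $G_b$--action given in~\eqref{eq:action}, and to show that for every $g\in G$ the scalar corrections cancel upon taking the trace. Fix a representation $V$ with isotypical decomposition $V=\bigoplus_{\phi\in A^\vee}V(\phi)$ as in~\eqref{eq:decomposition}. For $g\in G_b$, the operator $g\cdot_b(-)$ acts on $v\in V(\phi)$ by $g\cdot_b v = z(\bar g)(\bar g(\phi))^{-1}\,gv$. Since $\chi_b(g)=\tr(g\cdot_b(-)\mid V)$, and the trace only sees the diagonal blocks, I would restrict attention to those $\phi$ for which the ordinary operator $g$ maps $V(\phi)$ back into $V(\phi)$; for all other $\phi$ the contribution to the trace vanishes for both $\chi$ and $\chi_b$.

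The key observation is that $g$ sends $V(\phi)$ into $V(\bar g(\phi))$, because $g$ conjugates the $A$--action by $\bar g$ (here $A$ is normal, so $\bar g$ acts on $A$ and hence on $A^\vee$). Therefore the block of $g$ on $V(\phi)$ contributing to the trace is nonzero only when $\bar g(\phi)=\phi$, i.e.\ when $\phi\in B(\bar g)=(A^\vee)^{\bar g}$. But by Lemma~\ref{lem:z} (which we have assumed from here on), $z(\bar g)(\phi)=1$ precisely when $\bar g(\phi)=\phi$. Hence on every block $V(\phi)$ that actually contributes to $\tr(g\cdot_b(-)\mid V)$, the correction scalar $z(\bar g)(\bar g(\phi))^{-1}=z(\bar g)(\phi)^{-1}$ equals $1$, so $g\cdot_b(-)$ and $g$ have the same diagonal blocks and therefore the same trace. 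This gives $\chi_b(g)=\chi(g)$ for all $g$, and since $G_b$ has the same underlying set as $G$, this is exactly the assertion that the induced map $\rmR(G)\to\rmR(G_b)$ is the identity on the common underlying $\Z$--module of class functions.

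I would also record why this identification of underlying sets makes sense at the level of representation rings: the irreducible characters of $G_b$ are, by the computation above, exactly the functions $\chi$ for $\chi$ an irreducible character of $G$, so the canonical $\Z$--bases correspond, and $F_b$ sends $[V]$ to $[V]$ under this correspondence. The only real content is the block-vanishing argument together with Lemma~\ref{lem:z}; everything else is bookkeeping.

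**Main obstacle.** The delicate point is making precise the claim that $g$ carries $V(\phi)$ to $V(\bar g(\phi))$ and that only the $\bar g$--fixed characters $\phi$ contribute to the trace — in particular being careful about the direction of the action of $\bar g$ on $A^\vee$ and matching it with the argument of $z(\bar g)$ in~\eqref{eq:action}, so that the cancellation uses Lemma~\ref{lem:z} for the right subgroup $B(\bar g)=(A^\vee)^{\bar g}$. Once the conventions are pinned down, the cancellation is immediate.
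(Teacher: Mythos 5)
Your proposal is correct and follows essentially the same route as the paper's own proof: write the trace as a sum over the isotypical blocks $V(\phi)$, observe that only the $\bar g$--fixed characters $\phi\in B(\bar g)$ contribute, and invoke Lemma~\ref{lem:z} to see that the correction scalar $z(\bar g)(\bar g(\phi))^{-1}$ is $1$ on exactly those blocks. No gap; the care you take about the direction of the $\bar g$--action on $A^\vee$ is the same point the paper relies on implicitly.
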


Since~$G_b$ is equal to~$G$ as a set, this formulation makes sense. 

\begin{proof}
Let~$V$ be a representation of the group~$G$ with character~$\chi$. 
Recall our notation~\hbox{$B(q)=(A^{\vee})^q$} for elements~$q$ in the quotient~$Q=G/A$. 
When we look at the way that an element~$g$ of~$G$ permutes the isotypical summands~$V(\phi)$ of~$V$, the only places where we will get a contribution to the character are those~$V(\phi)$ which are stable under the action of~$\bar{g}$. 
Therefore, we can write 
\begin{equation}\label{eq:character}
\chi(g) = \sum_{\phi\in B(\bar{g})}\tr(\,g\,|\,{V(\phi)}\,).
\end{equation}
Let~$\chi_b$ be the character of the~$G_b$--representation~$F_b(V)$.
We then have 
\[
\chi_b(g) = \sum_{\phi\in B(\bar{g})}\frac1{z(\bar{g})(\bar{g}(\phi))}\tr(\,g\,|\,{V(\phi)}\,)
\]
by the definition~\eqref{eq:action} of the new action on~$V$ when restricted to the subspace~$V(\phi)$. Since we can assume, by Lemma~\ref{lem:z}, that we have~$z(\bar{g})(\phi)=1$ 
whenever~$\bar{g}$ fixes~$\phi$, we deduce~\hbox{$\chi=\chi_b$} as functions on~$G=G_b$ and we are done.
\end{proof}

Now that we have an explicit model for an isomorphism between the representation rings~$\rmR(G)$ and~$\rmR(G_b)$, we can check that it respects the odd Adams operations:

\begin{proof}[Proof of~Theorem~\ref{oddoperations}]
Let~$k$ be an odd integer. We have~$(\Psi^k\chi)(g)=\chi(g^k)$ by definition of the Adams operations and similarly for~$\Psi^k\chi_b$. 
We already know that~\hbox{$\chi_b=\chi$} by the previous Lemma~\ref{lem:previous}. 
Therefore, the only possible difference is in the~$k$--th power~$g^k$ of~$g$: it is calculated once in the group~$G$ and once in the group~$G_b$, respectively. We can assume that~$k$ is a positive integer. When we calculate~$g^k$ using the multiplication~\eqref{eq:G_b} in the group~$G_b$, we get
\[
b(\bar{g},\bar{g})\cdot b(\bar{g}^2,\bar{g})\cdot\ldots\cdot b(\bar{g}^{k-1},\bar{g})\cdot g^k
\]
in terms of~$G$. When we use formula~\eqref{b}, we can cancel repeating terms, and the result is 
\begin{equation}\label{eq:fraction}
\frac{z(\bar{g}^k)}{z(\bar{g})\cdot g(z(\bar{g}))\cdot\ldots\cdot g^{k-1}(z(\bar{g}))}\cdot g^k.
\end{equation}
The fractional part here, which we will denote by~$a=a_1/a_2$, is an element of~$(A^\vee)^\vee\cong A$, described as a~$1$--dimensional representation~$A^{\vee}\ra \frK^{\times}$, 
and to finish the proof it suffices to show that this element does not change the value of the character.

Using the formula~\eqref{eq:character} from the proof of the previous Lemma~\ref{lem:previous}, we know that
\[
(\Psi^k\chi)(g) = \chi(g^k) = \sum_{\phi\in B(\bar{g}^k)}\tr(\,g^k\,|\,{V(\phi)}\,),
\]
and we have a similar expression for~$\Psi^k(\chi_b)$. Therefore, we only need to consider the values~\hbox{$a(\phi)=a_1(\phi)/a_2(\phi)$} of the element~$a$ on~$\phi\in B(\bar{g}^k)$. Recall that these~$\phi\in B(\bar{g}^k)$ are homomorphisms~\hbox{$\phi\colon A\to\frK^\times$} that are fixed by~$g^k$.

By the assumption that we made~(after Lemma~\ref{lem:z} and its proof) on the functions~$z(q)$, and because~\hbox{$\phi\in A^\vee$} is fixed by~$g^k$, we get~$a_1(\phi)=z(\bar{g}^k)(\phi)=1$ for the numerator of~\eqref{eq:fraction}. 

The denominator
\begin{equation}\label{eq:Z}
a_2(\phi)=z(\bar{g})(\phi)z(\bar{g})(g(\phi))\cdots z(\bar{g})(g^{k-1}(\phi)).
\end{equation}
of~\eqref{eq:fraction} needs more effort.
We shall prove~$a_2(\phi)=1$. 

On the one hand, the order of~$a_2(\phi)$ is~$2^m$ for some~$m$. 
This follows from the fact that~$A$ is a~$2$--group and that all the values of~$\alpha$ and of~$z(q)$ can be chosen to have values which are~$2^l$--th roots of unity, for some~$l$. 
(In case the group~$A$ has odd order we can choose a representative of~$[\alpha]$ which is~$Q$--invariant as a function. It is then easy to prove that in this case all Adams operations are being preserved.)
On the other hand, we will show now that~$a_2(\phi)^k=1$. Since~$k$ is odd, this will finish the proof.

To prove~$a_2(\phi)^k=1$, we first extend the~$2$--cocycle~$\alpha$ in the following way:
if~$\phi_1,\phi_2,\ldots \phi_k$ all lie in~$A^{\vee}$, then we define
\[
\alpha(\phi_1,\phi_2,\ldots,\phi_k) = \alpha(\phi_1,\phi_2)\alpha(\phi_1\phi_2,\phi_3)\ldots\alpha(\phi_1\phi_2\cdots \phi_{k-1},\phi_k).
\]
In other words, if we think of the~$2$--cocycle~$\alpha$ as realizing an extension
\[
1\longrightarrow\frK^{\times}\longrightarrow\Gamma\longrightarrow A^{\vee}\longrightarrow1
\]
then we have that~$s(\phi_1)\cdots s(\phi_k) = \alpha(\phi_1,\ldots,\phi_k)s(\phi_1\phi_2\cdots \phi_k)$ where the map~\hbox{$\phi\mapsto s(\phi)$} is a set-theoretical section of the projection~$\Gamma\to A^{\vee}$.

We have the formula
\[
\frac{z(\bar{g})(\phi)z(\bar{g})(\psi) }{ z(\bar{g})(\phi\psi) }
=
\frac{ \alpha(\phi,\psi)}{\alpha(g(\phi),g(\psi))}.
\]
By using it repeatedly, we get
\[
a_2(\phi) =\frac{\alpha(\phi,g(\phi),\ldots, g^{k-1}(\phi))}{\alpha(g(\phi),\ldots, g^{k-1}(\phi), \phi)}.
\]
We have also used here the fact that~$z(\bar{g})(\phi g(\phi)\cdots g^{k-1}(\phi))=1$.
This is because the homomorphism~$\phi g(\phi)\cdots g^{k-1}(\phi)$ is~$g$--invariant, and by Lemma~\ref{lem:z} we can assume that the value of~$z(\bar{g})$ on it is 1.

For every~$j=1,\ldots,k-1$ we can also write
\[
a_2(\phi)=z(\bar{g})(g^{j}(\phi)\cdots z(\bar{g})(g^{j+k-1}(\phi))).
\]
By doing the same calculation again, we get
\[
a_2(\phi)=\frac{\alpha(g^j(\phi),\ldots, g^{j+k-1}(\phi))}{\alpha(g^{j+1}(\phi),\ldots, g^{j+k}(\phi))}
\]
for every~$j$. 
We multiply all these expression for~$a_2(\phi)$. They cancel each other, and we are left with~\hbox{$a_2(\phi)^k=1$}.
This finishes the proof.
\end{proof}

%%%

\section{Examples, remarks, and a question}\label{sec:examples}

The following examples illustrate the subtlety of the situation.

\begin{example}\label{ex:2}
Let~$\rmD_8$ again denote the dihedral group of order~$8$. We will describe a (non-symmetric) monoidal autoequivalence of~$\Rep(\rmD_8)$ which does not preserve~$\Psi^2$. We will use the following presentation:
\[
\rmD_8=\langle\, x,y,q\,|\,x^2=y^2=q^2=1, [x,y]=1, [q,y]=1, [q,x]=y\,\rangle.
\]
The subgroup~$A=\langle\,x,y\,\rangle$ is a normal subgroup isomorphic to the Klein group. Let us denote a dual pair of generators of~$A^{\vee}$ by~$\nu,\mu$. 
The group~$\rmH^2(A^{\vee}\,;\,\frK^{\times})$ has order~$2$. A generator is given by~$\alpha(\nu^a\mu^b,\nu^c\mu^d) =(-1)^{bc}$.
 This cohomology class must be~$G/A$--invariant.
 We choose~$z(q)(\nu^a\mu^b) = \rmi^b$. We then get that~$z$ satisfies the condition of Lemma~\ref{lem:z}.
 The resulting~$2$--cocycle~$b$ is given by~$b(q,q)=y$. Notice that~$b$ is a trivial cocycle since in~$G$ the equation~$(qx)^2 = y$ holds and therefore the function~$G\to G_b$ given by sending~$x^iy^jq^k$ to~$x^{i+k}y^jq^k$ is an isomorphism of groups.
 However, the resulting monoidal equivalence~\hbox{$\Rep(\frK G)\to \Rep(\frK G_b)\to \Rep(\frK G)$} is not symmetric and it does not preserve the second Adams operation.
 Indeed, the group~$G$ has four 1-dimensional irreducible representations~$V_{00},V_{01},V_{10},V_{11}$ and a unique two dimensional irreducible representation~$W$. The element~$x^iy^jq^k$ acts on~$V_{ab}$ by the scalar~$(-1)^{ai+kb}$.
A direct calculation shows that
\[
\Psi^2(W) = V_{00}+ V_{10}+V_{01}-V_{11}
\]
while~$F(W)=W$ and
\[
F(\Psi^2(W)) = V_{00}-V_{10}+V_{01}+V_{11}
\]
is different from~$\Psi^2(F(W))$.
\end{example}

%%%

\begin{example}\label{ex:Adams_but_not_symmetric}
There are examples of monoidal equivalences which preserve all Adams operations, but which are not symmetric. 
Indeed, there are Etingof--Gelaki equivalences that preserve all Adams operations without being symmetric. 
To see this, consider the case where the quotient~$Q$ is trivial but the subgroup~$A$ is not. 
The cocycle~$b$ is then of course trivial, and the functor~$F_b$ preserves trivially all the Adams operations. 
However, since the abelian subgroup~$A$ is not trivial,~$\alpha$ is not trivial, and the functor~$F_b$ is not symmetric. The smallest such example is the Klein~$4$--group~\hbox{$\rmC_2\times\rmC_2$}. This group has four~$1$--dimensional irreducible representations, which we shall denote by~$V_{ij}$ for~$i,j\in\{0,1\}$.
On the category~$\Rep(\frK(\rmC_2\times\rmC_2))$ we have two different symmetries. The canonical symmetry~\hbox{$V_{ij}\ot V_{kl}\cong V_{kl}\ot V_{ij}$} is given by
\[
x\ot y\longmapsto y\ot x,
\]
and the second symmetry is given by 
\[
x\ot y\longmapsto(-1)^{il+jk}y\ot x.
\]
However, with respect to both symmetries, the Adams operation~$\Psi^k$ is the identity for every odd~$k$, and~$\Psi^k(V_{ij})=V_{00}$ for every even~$k$.
\end{example}

\begin{remark} 
In all examples of monoidal equivalences~\hbox{$F\colon\Rep(G)\to\Rep(G')$} that preserve all Adams operations that we have seen so far, the groups~$G$ and~$G'$ are isomorphic. If~$F$ is symmetric, then this is clear: an isomorphism is induced by the functor~$F$, because the groups~$G$ and~$G'$ are isomorphic to the automorphism groups of the fiber functors. In the Examples~\ref{ex:Adams_but_not_symmetric}, we have~$G\cong G'$ by construction.
\end{remark}

The preceding remark may be taken as evidence for an affirmative answer to the following. 

\begin{question}
Suppose that~$G$ and~$G'$ are two isocategorical groups, and assume that the induced isomorphism~$\rmR(G)\cong\rmR(G')$ of representation rings arising from a monoidal equivalence~\hbox{$\Rep(G)\to\Rep(G')$} of representation categories preserves~(not only the odd Adams operations but also) the second Adams operation~$\Psi^2$. Do the groups~$G$ and~$G'$ have to be isomorphic? 
\end{question}

The affine symplectic group $G=V\rtimes\mathrm{Sp}(V)$, for $V=\bbF_2^n$, and its non-split partner $G'$ are worth studying in this respect~\cite{Fischer, Davydov, Etingof+Gelaki}. 

Here is more that may be considered in support of an affirmative answer:

\begin{remark} 
Let~$G$ be a group, and let~$V$ and~$W$ be two~$G$--representations. We will use the notation of Section~\ref{isocats}. If~$G_b$ is a group such that~$G$ and~$G_b$ are isocategorical we have two corresponding~$G_b$--representations~$V_b$ and~$W_b$. A slight variation of the results of Section~\ref{sec:FS} allows us to deduce the following: for every permutation~$\sigma$ in the symmetric group~$\frS_n$, we have~\hbox{$\chi_U(\sigma)=\tr(\,\sigma\,|\,U\,)=\tr(\,\sigma\,|\,U_b\,)=\chi_{U_b}(\sigma)$} when we consider the action of~$\sigma$ on the spaces~\hbox{$U=\Hom_{G}(V,W^{\ot n})$} and~\hbox{$U_b=\Hom_{G_b}(V_b,W_b^{\ot n})$}. Since the permutation groups~$\frS_n$ are finite and the ground field is of characteristic zero, this already means that~$U$ and~$U_b$ are actually isomorphic as~$\frS_n$--representations. 
\end{remark}

%%%

\section*{Acknowledgment}

Both authors were supported by the Danish National Research Foundation through the Centre for Symmetry and Deformation~(DNRF92). 
The first author was also supported by the Research Training Group 1670 ``Mathematics Inspired by String Theory and Quantum Field Theory.'' 
We thank Alexei Davydov, Lars Hesselholt, Victor Ostrik, and Bj\"orn Schuster for discussions, and the referee for their valuable comments on the exposition.

%%%

%%%

\vfill

\parbox{\linewidth}{%
Ehud Meir\\
Institute of Mathematics\\
University of Aberdeen\\
Fraser Noble Building\\
Aberdeen AB24 3UE\\
UNITED KINGDOM\\
\href{mailto:meirehud@gmail.com}{meirehud@gmail.com}\\
\phantom{}\\
Markus Szymik\\
Department of Mathematical Sciences\\
NTNU Norwegian University of Science and Technology\\
7491 Trondheim\\
NORWAY\\
\href{mailto:markus.szymik@ntnu.no}{markus.szymik@ntnu.no}}

\end{document}